\DeclareFontFamily{U}{mathb}{\hyphenchar\font45}
\DeclareFontShape{U}{mathb}{m}{n}{
      <5> <6> <7> <8> <9> <10> gen * mathb
      <10.95> mathb10 <12> <14.4> <17.28> <20.74> <24.88> mathb12
      }{}
\DeclareSymbolFont{mathb}{U}{mathb}{m}{n}
\DeclareMathSymbol{\righttoleftarrow}{3}{mathb}{"FD}
\theoremstyle{plain}
\newtheorem{prop}{Proposition}[section]
\newtheorem{theo}[prop]{Theorem}
\newtheorem{lemm}[prop]{Lemma}
\theoremstyle{remark}
\newtheorem{assu}{Assumption}
\theoremstyle{definition}
\newtheorem{defi}[prop]{Definition}
\newtheorem{exam}[prop]{Example}
\numberwithin{equation}{section}
\newcommand{\A}{{\mathbb A}}
\newcommand{\bP}{{\mathbb P}}
\newcommand{\Q}{{\mathbb Q}}
\newcommand{\G}{{\mathbb G}}
\newcommand{\Z}{{\mathbb Z}}
\newcommand{\cB}{{\mathcal B}}
\newcommand{\cC}{{\mathcal C}}
\newcommand{\cO}{{\mathcal O}}
\newcommand{\cL}{{\mathcal L}}
\newcommand{\cX}{{\mathcal X}}
\newcommand{\cY}{{\mathcal Y}}
\newcommand{\cZ}{{\mathcal Z}}
\newcommand{\cV}{{\mathcal V}}
\newcommand{\rK}{{\mathrm K}}
\newcommand{\upperhalfplane}{{\mathbb H}}
\newcommand{\eqto}{\stackrel{\lower1.5pt\hbox{$\scriptstyle\sim\,$}}\to}
\newcommand{\eqdashto}{\stackrel{\lower1.5pt\hbox{$\scriptstyle\sim\,$}}\dashrightarrow}
\newcommand{\actsfromleft}{\mathrel{\reflectbox{$\righttoleftarrow$}}}
\newcommand{\actsfromright}{\righttoleftarrow}
\DeclareMathOperator{\Pic}{Pic}
\DeclareMathOperator{\Spec}{Spec}
\DeclareMathOperator{\Var}{Var}
\DeclareMathOperator{\Burn}{Burn}
\newcommand{\ocB}{{\overline\cB}}
\newcommand{\oBurn}{{\overline\Burn}}
\begin{document}
\title[Birational types of orbifolds]{Birational types of algebraic orbifolds}
\author{Andrew Kresch}
\address{
  Institut f\"ur Mathematik,
  Universit\"at Z\"urich,
  Winterthurerstrasse 190,
  CH-8057 Z\"urich, Switzerland
}
\email{andrew.kresch@math.uzh.ch}
\author{Yuri Tschinkel}
\address{
  Courant Institute,
  251 Mercer Street,
  New York, NY 10012, USA
}

\email{tschinkel@cims.nyu.edu}

\address{Simons Foundation\\
160 Fifth Avenue\\
New York, NY 10010\\
USA}

\date{November 4, 2019}

\begin{abstract}
We introduce a variant of the birational symbols group of
Kontsevich, Pestun, and the second author, and use this to define
birational invariants of algebraic orbifolds.
\end{abstract}

\maketitle

\section{Introduction}
\label{sec.introduction}
Let $k$ be a field of characteristic zero and $X$ a smooth projective
variety over $k$, of dimension $n$;
we require our varieties to be irreducible, but not necessarily
geometrically irreducible.
The paper \cite{kontsevichtschinkel} introduced the
\emph{Burnside group} of varieties
$$
\Burn_n=\Burn_{n,k},
$$
the free abelian group on isomorphism classes of
finitely generated fields of transcendence degree $n$ over $k$; for such a field
$K$ we denote the corresponding generator by $[K]$.
To $X$ one associates its class
$$
[X]:=[k(X)]\in \Burn_n,
$$
extended by additivity for general
smooth projective schemes.
To
$$
U\subset X \setminus D,
$$
the complement to a simple normal crossing
divisor
$$
D=D_1\cup\dots\cup D_\ell,
$$
one may also associate a
class in $\Burn_n$:
\begin{equation}
\label{eqn.U}
[U]:=[X]-\sum_{1\le i\le \ell} [D_i\times \bP^1]+
\sum_{1\le i<j\le \ell} [(D_i\cap D_j)\times \bP^2]-\dots.
\end{equation}
This is not only an invariant of the isomorphism type of $U$,
but is a birational invariant in the following sense:
$[U]=[U']$ in $\Burn_n$ if there exist a quasiprojective variety $V$ and
birational projective morphisms
\[ V\to U\qquad\text{and}\qquad V\to U'. \]
This formalism was used to establish specialization of rationality.

Now we suppose that $X$ is equipped with a faithful action of a
finite abelian group $G$.
Then $\Burn_n$ is replaced by the equivariant Burnside group
$$
\Burn_n(G),
$$
introduced in \cite{kontsevichpestuntschinkel}
(in a slightly different form, as explained in Appendix \ref{app.BnG}).
This is the quotient by suitable relations of
the free abelian group on
triples consisting of:
\begin{itemize}
\item
a subgroup $H\subset G$,
\item a $G/H$-Galois algebra
extension $K$ of a field $K_0$ of transcendence degree $d\le n$ over $k$,
up to equivariant isomorphism,
required to satisfy
Assumption \ref{assu.Hexponent} in Appendix \ref{app.BnG}
(a technical condition, always satisfied when
$k$ is algebraically closed),
and
\item
a faithful $(n-d)$-dimensional linear representation of $H$ over $K_0$
with trivial space of invariants, up to equivalence.
\end{itemize}
Then an invariant of the $G$-equivariant birational type of $X$ is obtained
from a stratification of $X$ by the stabilizer group $H\subset G$ as the sum
\begin{equation}
\label{eqn.GX}
\sum_{H\subset G}
\sum_{\substack{Y\subset X\\ \text{with stabilizer $H$}}}
[(G/H \actsfromleft k(Y),\beta_Y(X)],
\end{equation}
where the inner sum is over unions $Y$ of $G$-orbits of components,
with $k(Y)$ understood as the product
of function fields of the components of $Y$,
and where $\beta_Y(X)$ records the generic normal bundle representation along $Y$.
There is an analogous $G$-equivariant birational invariant of $U$ as above,
where each $D_i$ is assumed to be $G$-invariant.

This paper concerns birational invariants of
(quasi)projective {\em orbifolds} $\cX$.
Here, by an orbifold we mean a smooth separated irreducible
finite-type Deligne-Mumford stack
over $k$ with trivial generic stabilizer.
Such a stack has a coarse moduli space \cite{keelmori}, a
separated algebraic space of finite type over $k$.
Following \cite{kreschseattle}, we say that the orbifold $\cX$ is
\emph{quasiprojective} when the
coarse moduli space is a quasiprojective variety, and is
\emph{projective} when the
coarse moduli space is a projective variety.
By Theorems 4.4 and 5.3 of op.\ cit.,
every quasiprojective orbifold may be presented as a locally closed substack
of a projective orbifold.

We will introduce a variant
$$
\oBurn_n
$$
of the groups $\Burn_n$ and  $\Burn_n(G)$.
In essence, we only carry in $\oBurn_n$ the information of
representations of finite abelian groups, up to automorphisms of those groups.
Working with $\oBurn_n$, we exhibit a
birational invariant of a quasiprojective $n$-dimensional orbifold $\cX$.

It suffices to consider finite {\em abelian} groups thanks to the
divisorialification procedure of \cite{berghrydh}, a sequence of blow-ups in
smooth centers which,
when applied to a general orbifold, yields an orbifold with only
abelian groups as geometric stabilizer groups.
Weak factorization \cite{AKMW}, in a
functorial form proved in \cite{abramovichtemkin},
is used to exhibit the desired birational invariance.

In Section \ref{sec.scissors} we recall the Burnside group of varieties
and establish a presentation by scissors-like relations, analogous to
the scissors relations defining the Grothendieck group of varieties.
Section \ref{sec.hebg} introduces the orbifold version $\oBurn_n$,
where the birational invariant of quasiprojective orbifolds
defined in Section \ref{sec.birat} takes its value
(Theorem \ref{thm.projective}).
A computation of invariants of orbifold surfaces reveals an
intriguing connection with modular curves (Proposition \ref{prop.X0p}),
whose proof is given in Section \ref{sec.manin}.

\medskip
\noindent
\textbf{Acknowledgments:}
The first author was partially supported by the
Swiss National Science Foundation.

\section{Burnside group via scissors relations}
\label{sec.scissors}
Let $k$ be a field of characteristic zero.
The Grothendieck group
$$
\rK_0(\Var_k)
$$
may be approached in two ways,
as an abelian group generated by the classes of algebraic varieties over $k$
with the classical scissors relations
(where it makes no difference if we restrict to just
smooth quasiprojective varieties), or via the Bittner presentation
\cite{bittner}, which only involves smooth projective varieties.
Of course, $\rK_0(\Var_k)$ has a ring structure as well,
but we do not concern ourselves with this here.

In this section we record the
observation that the Burnside group $\Burn_n$ also admits a
description in terms of scissors relations.
As mentioned in the Introduction, we only require our varieties to be
irreducible (but not necessarily geometrically irreducible).

\begin{lemm}
\label{lem.divisors}
Let $k$ be a field of characteristic zero, and let
$W$ be a smooth quasiprojective variety over $k$.
For any nonempty open $U\subset W$ there exist
divisors $D_1$, $\dots$, $D_\ell$ such that $W\smallsetminus D_1$ is
contained in $U$,
and $D_1\smallsetminus D_2$, $\dots$, $D_{\ell-1}\smallsetminus D_\ell$,
$D_\ell$ are all smooth.
\end{lemm}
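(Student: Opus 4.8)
The plan is to induct on the dimension of $W\smallsetminus U$, i.e.\ on the codimension of the "missing" locus inside $W$. The base case is when $W\smallsetminus U$ is empty, where there is nothing to prove (take $\ell=0$, or equivalently $D_1=\emptyset$). Otherwise, let $Z=W\smallsetminus U$ with its reduced structure, a closed subvariety of $W$ of some dimension $m<\dim W$. The first step is to choose $D_1$: I would take $D_1$ to be a hypersurface in $W$ containing $Z$, chosen so that $D_1$ is smooth along the generic point of each top-dimensional component of $Z$, and moreover so that the singular locus of $D_1$, together with the locus where $D_1$ fails to contain $Z$ "nicely," has strictly smaller dimension than $m$. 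Concretely, one can work locally: near a general point of a top-dimensional component of $Z$, the ideal of $Z$ is generated by $\codim(Z)$ functions, and a general $k$-linear combination of a spanning set of a suitable linear system (e.g.\ a very ample linear system on a projective closure, restricted to $W$) vanishing on $Z$ will be smooth there; since $k$ has characteristic zero, a Bertini-type argument applies over the (possibly non-algebraically-closed) field $k$ as well, after passing to a projective embedding.

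The key point is then that $W\smallsetminus D_1\subset U$ by construction (since $D_1\supset Z$), and that $D_1$ itself is a smooth quasiprojective variety away from a closed subset $Z_1\subset D_1$ with $\dim Z_1<m$. At this point I would like to recurse: apply the inductive hypothesis to the smooth quasiprojective variety $D_1^{\mathrm{sm}}$ (the smooth locus of $D_1$) — or rather, to avoid circularity in the dimension count, to each irreducible component of $D_1$ after resolving, but in characteristic zero one can simply replace $D_1$ by a smooth model; however the cleaner route is to observe that we need the nested sequence $D_1\supset D_2\supset\cdots$ to consist of actual closed subvarieties of $W$, with the successive differences smooth. So instead I would set up the induction more carefully: prove the statement that for any smooth quasiprojective $W$ and any closed $Z\subsetneq W$, there is a chain $Z\subset D_1$, with $D_1$ a divisor, such that $D_1\smallsetminus D_2,\dots,D_{\ell-1}\smallsetminus D_\ell,D_\ell$ are smooth and $D_2\supset(\text{bad locus of }D_1)$, proceeding by induction on $\dim Z$. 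Having chosen $D_1$ as above with bad locus $Z_1$ of smaller dimension, the inductive hypothesis applied to the pair $(D_1,Z_1)$ — note $D_1$ need not be smooth, so one first restricts to $D_1\smallsetminus Z_1'$ for a slightly enlarged $Z_1'$ on which things are smooth, or works component-by-component on the normalization — produces the remaining divisors $D_2,\dots,D_\ell$ inside $D_1$.

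The main obstacle is the bookkeeping around non-smoothness and reducibility of the intermediate $D_i$: the lemma asks only that the locally closed pieces $D_{i-1}\smallsetminus D_i$ be smooth, so one must arrange $D_i$ to contain not just (a divisor through) the part of $Z$ remaining, but also the entire singular locus of $D_{i-1}$; ensuring simultaneously that $D_i$ can be taken to be a divisor in $D_{i-1}$ (hence of strictly smaller dimension, so the induction terminates) requires that $\dim(\operatorname{Sing} D_{i-1})<\dim D_{i-1}$, which holds since $D_{i-1}$ is generically reduced of pure dimension — this is where the Bertini/general-position choice at each stage is essential. Because all varieties in sight are quasiprojective, one can carry out each such choice by embedding in a projective space and intersecting with a general hypersurface of high degree containing the relevant closed subset, invoking a characteristic-zero Bertini theorem valid over $k$ (not just $\bar k$) for the smoothness of the generic member; the transcendence-degree and irreducibility subtleties are handled by working one component at a time. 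Once the chain terminates — which it does because $\dim D_i$ strictly decreases — we are done.
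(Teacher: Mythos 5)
Your first step (a general high-degree hypersurface through $Z=W\smallsetminus U$, smooth away from a subset of $Z$ of dimension $<\dim Z$) is essentially the paper's, which simply cites Kleiman--Altman for exactly this statement: the general member containing $Z$ has singular locus contained in $Z$ and containing no component of $Z$. The genuine gap is in how you recurse. You set up the induction on the pair $(D_1,Z_1)$, seeking the remaining $D_2,\dots,D_\ell$ as a nested chain of divisors \emph{inside} $D_1$. This fails on two counts. First, $D_1$ is in general singular, so the inductive statement (which needs a smooth ambient variety to run the Bertini step) does not apply to it; your proposed repairs do not close this. If you shrink to $D_1\smallsetminus Z_1'$ before recursing, the divisor $D_2$ you produce need not contain the discarded locus, and it is precisely the containment $D_1^{\mathrm{sing}}\subset D_2$ that makes $D_1\smallsetminus D_2$ smooth; if you pass to the normalization, the images of the divisors you construct are not divisors on $D_1$ in any controlled way and smoothness of the differences downstairs is not guaranteed (the non-normal locus sits inside $D_1^{\mathrm{sing}}$, exactly where the trouble is). Second, even if the nested construction went through, it would produce subvarieties of codimension $2,3,\dots$ in $W$, not divisors on $W$, so it proves a different statement from the lemma.

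The fix, and the paper's route, is to never leave the smooth ambient variety: induct on $\dim Z$ with $W$ fixed. Having chosen the divisor $D_1\supset Z$ on $W$ with $D_1^{\mathrm{sing}}\subset Z$ and $\dim D_1^{\mathrm{sing}}<\dim Z$, apply the inductive hypothesis to $W$ with the open set $W\smallsetminus D_1^{\mathrm{sing}}$ in place of $U$. This yields divisors $D_2,\dots,D_\ell$ \emph{on $W$} with $D_1^{\mathrm{sing}}\subset D_2$ and $D_2\smallsetminus D_3,\dots,D_{\ell-1}\smallsetminus D_\ell,D_\ell$ smooth; then $D_1\smallsetminus D_2\subset D_1\smallsetminus D_1^{\mathrm{sing}}$ is smooth as well, and $W\smallsetminus D_1\subset U$ since $Z\subset D_1$. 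No induction inside $D_1$, and no normalization, is needed.
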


\begin{proof}
Let $Z=W\smallsetminus U$.
By \cite[Thm.\ 7]{kleimanaltman},
given an embedding of $W$ in projective space,
a general hypersurface of
sufficiently large degree containing $Z$ defines a divisor
$D_1$ on $W$ whose singular locus $D_1^{\mathrm{sing}}$ is
contained in $Z$ and does not contain any irreducible component of $Z$.
If $D_1$ is smooth, then we are done with $\ell=1$.
Otherwise, we have $\dim(D_1^{\mathrm{sing}})<\dim(Z)$, and we conclude
by induction on $\dim(Z)$.
\end{proof}

\begin{prop}
\label{prop.scissors}
Let $k$ be a field of characteristic zero and $n$ a natural number.
Then the assignment to $[k(X)]$ of $[X]$ for smooth projective varieties $X$ of
dimension $n$ over $k$ defines an isomorphism
\[
\Burn_n\stackrel{\lower1.5pt\hbox{$\scriptstyle\sim\,$}}{\longrightarrow} \Big(\bigoplus_{[U],\,\dim(U)=n} \Z\cdot [U]\Big)\Big/\text{modified-scissors},
\]
where, on the right, we have the
quotient of the free abelian group on isomorphism classes of
smooth quasiprojective varieties of dimension $n$ over $k$
by the \emph{modified scissors relations}
\[ [U]=[V\times \bP^{n-d}]+[U\smallsetminus V] \]
for smooth closed subvarieties $V\subset U$ of dimension $d<n$.
The inverse isomorphism is given by the formula \eqref{eqn.U}.
\end{prop}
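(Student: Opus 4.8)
The plan is to exhibit mutually inverse homomorphisms between $\Burn_n$ and the modified-scissors quotient $Q$. In one direction, I would send a smooth projective variety $X$ to the class $[X]$ in $Q$ (noting that $X$ itself is a smooth quasiprojective variety, so $[X]$ makes sense as a generator of the free group before passing to $Q$), and check this respects the defining relations of $\Burn_n$. In the other direction, I would send a smooth quasiprojective $U$ to the class in $\Burn_n$ defined by the formula \eqref{eqn.U}, after choosing a smooth projective compactification $X \supset U$ with simple normal crossing boundary $D = D_1 \cup \dots \cup D_\ell$ (which exists by resolution of singularities in characteristic zero); the first task is to show this is well-defined, i.e.\ independent of the chosen compactification. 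This is essentially the birational invariance of \eqref{eqn.U} already recalled in the Introduction, combined with weak factorization to pass between any two compactifications: any two SNC compactifications are dominated by a common one obtained by a sequence of blow-ups in smooth centers compatible with the boundary, and one checks that a single such blow-up leaves \eqref{eqn.U} unchanged by an inclusion–exclusion bookkeeping over the strata.

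Next I would verify that the map $U \mapsto [U] \in \Burn_n$ kills the modified scissors relations. Given a smooth closed $V \subset U$ of dimension $d < n$, I would compactify $U$ to $X$ with SNC boundary $D$, then further blow up so that the closure $\bar V$ of $V$ is smooth and meets $D$ in SNC fashion, and also blow up along $\bar V$ itself; tracking the strata through these modifications and applying the definition \eqref{eqn.U} to $U$, to $U \smallsetminus V$ (compactified inside a blow-up of $X$ along $\bar V$), and to the $\bP^{n-d}$-bundle structure on the exceptional divisor over $\bar V$, the identity $[U] = [V \times \bP^{n-d}] + [U \smallsetminus V]$ in $\Burn_n$ reduces to a combinatorial identity among the contributing field classes. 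Here I would use that the generic normal bundle of the exceptional divisor and the product structure $V \times \bP^{n-d}$ contribute matching terms. This shows the formula descends to a homomorphism $Q \to \Burn_n$.

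It then remains to check the two composites are the identity. Starting from a generator $[K] = [k(X)]$ of $\Burn_n$ with $X$ smooth projective: applying \eqref{eqn.U} to $U = X$ (with empty boundary) gives back $[X]$, so one composite is visibly the identity. For the other composite $Q \to \Burn_n \to Q$, given a smooth quasiprojective $U$ with compactification $X$, SNC boundary $D_1 \cup \dots \cup D_\ell$, I must show that $[U] = [X] - \sum_i [D_i \times \bP^1] + \dots$ holds already in $Q$, not just in $\Burn_n$. This is where Lemma \ref{lem.divisors} enters: it lets me peel off $U$ from $W = X$ by a chain of divisors $D_1 \supset D_2 \supset \dots$ with all successive differences smooth, so that repeated application of the modified scissors relation in $Q$ expresses $[U]$ as the prescribed alternating sum of classes $[(\text{stratum}) \times \bP^{\text{codim}}]$; matching this telescoping with \eqref{eqn.U} finishes the argument. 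I expect the main obstacle to be the well-definedness in the first direction — pushing the weak-factorization argument through the inclusion–exclusion formula \eqref{eqn.U} cleanly, in particular handling how a blow-up center sits relative to the existing boundary strata — since the scissors-relation side and the compatibility with blow-ups interact in a combinatorially delicate way; the rest is bookkeeping.
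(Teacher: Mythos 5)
There is a genuine gap, and it sits precisely where you dismiss the work as ``bookkeeping'': the well-definedness of the forward map $\Burn_n\to Q$. Since $\Burn_n$ is free on field classes $[K]$, defining the map means choosing a smooth projective model $X$ with $k(X)\cong K$, and one must prove that any two birational smooth projective $n$-folds have equal classes in the modified-scissors quotient (``the defining relations of $\Burn_n$'' that you propose to check are exactly this statement, and you give no argument for it). By weak factorization it reduces to comparing $X$ with $B\ell_YX$ for a smooth center $Y$ of dimension $d<n$; the scissors relations give $[X]=[Y\times\bP^{n-d}]+[X\smallsetminus Y]$ and $[B\ell_YX]=[\bP(N_{Y/X})\times\bP^1]+[X\smallsetminus Y]$, so everything hinges on the identity $[\bP(N_{Y/X})\times\bP^1]=[Y\times\bP^{n-d}]$ modulo the modified scissors relations. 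This is not a formal consequence of the relations: the paper proves the more general projective-bundle identity \eqref{eqn.PF} by induction on the dimension of the base, using $[Z\times\A^1]=0$ (from $Z\times\{\infty\}\subset Z\times\bP^1$), the consequence $[W\times\bP^{n-e}]=[W\times(\bP^1)^{n-e}]$, and Lemma \ref{lem.divisors} to peel the complement of an open set trivializing the bundle off by divisors whose successive differences are smooth. Your proposal never addresses this; without it the forward homomorphism is not even defined, so the rest of the plan has nothing to compose with.

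Two further calibration points. First, your appeal to Lemma \ref{lem.divisors} is misplaced: you invoke it for the composite $Q\to\Burn_n\to Q$, where it is not what is needed (the boundary strata of an SNC compactification are already smooth, and that composite is handled by iterated application of the modified scissors relations to the strata, as the paper indicates; moreover the divisors produced by the lemma are unrelated to the given $D_i$, so they would not yield ``the prescribed alternating sum''); its real role is inside the induction proving \eqref{eqn.PF}, i.e., inside the step you omitted. Second, the independence of \eqref{eqn.U} from the chosen SNC compactification, which you single out as the main obstacle, is the birational invariance already established in \cite{kontsevichtschinkel} and is simply quoted by the paper, so no weak-factorization argument is required there. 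Your treatment of the reverse direction's compatibility with a modified scissors relation (compactify so that the closure of $V$ is smooth with normal crossing relative to the boundary, blow up along it, and match strata, with intersections involving the exceptional divisor contributing products with projective spaces) does agree with the paper's argument.
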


\begin{proof}
We check that the map from the statement of the proposition is
well-defined, i.e., the classes of any pair of birationally equivalent
smooth projective $n$-dimensional varieties are equal modulo
the modified scissors relations.

By weak factorization, it suffices to consider the case of
$X$ and $B\ell_YX$, where $X$ is smooth and projective of dimension $n$
and $Y$ is a smooth subvariety of $X$ of dimension $d<n$.
We have
\begin{align*}
[X]&=[Y\times \bP^{n-d}]+[X\smallsetminus Y],\\
[B\ell_YX]&=[\bP(N_{Y/X})\times \bP^1]+[X\smallsetminus Y],
\end{align*}
where $N_{Y/X}$ denotes the normal bundle.
We are done if we can show that
$[\bP(N_{Y/X})\times \bP^1]=[Y\times \bP^{n-d}]$.
We will show, more generally,
that for any smooth quasiprojective variety $W$ of dimension $e<n$
and vector bundle $F$ on $W$ of rank $r\le n+1-e$, we have
\begin{equation}
\label{eqn.PF}
[\bP(F)\times \bP^{n+1-e-r}]=[W\times \bP^{n-e}].
\end{equation}
For any smooth quasiprojective variety $Z$ of
dimension $n-1$ we have $[Z\times \A^1]=0$
(by considering $Z\times \{\infty\}\subset Z\times \bP^1$),
and hence
$$
[W\times \bP^{n-e}]=[W\times (\bP^1)^{n-e}]
$$
(by considering $W\times \bP^{n-e-1}\subset W\times \bP^{n-e}$).
We prove \eqref{eqn.PF}
by induction on $e$; the case $e=0$ is now clear.
Let $U\subset W$ be a nonempty open subset on which $F$ is trivial, and
$D_1$, $\dots$, $D_\ell$, divisors as in
Lemma \ref{lem.divisors}.
The modified scissors relation and the induction hypothesis lead to
\begin{align*}
[\bP(F)\times \bP^{n+1-e-r}] &=
[D_\ell\times \bP^{n+1-e} ]+
[(D_{\ell-1}\smallsetminus D_\ell)\times \bP^{n+1-e} ] \\
&\,\,\,\,\,\,\,\,+\dots+[(D_1\smallsetminus (D_2\cup\dots\cup D_\ell)\times \bP^{n+1-e} ] \\
&\,\,\,\,\,\,\,\,\,\,\,\,\,\,\,\,\,\,\,\,\,\,\,\,\,\,+ [(W\smallsetminus(D_1\cup\dots\cup D_\ell))\times \bP^{n-e}].
\end{align*}
We conclude with the relations, for $1\le i\le \ell$:
\begin{align*}
[(W\smallsetminus (D_{i+1}\cup\dots\cup D_\ell))\times \bP^{n-e}]&=
[(D_i\smallsetminus (D_{i+1}\cup\dots\cup D_\ell))\times \bP^{n+1-e}] \\
&\,\,\,\,\,\,\,\,+
[(W\smallsetminus (D_i\cup\dots\cup D_\ell))\times \bP^{n-e}].
\end{align*}

Now we verify that the map in the reverse direction,
given by the formula \eqref{eqn.U}, is well-defined,
i.e., respects the modified scissors relations.
Let $V$ be a smooth closed subvariety of $U$ of dimension $d$.
Then $U$ may be presented as the complement in a smooth projective variety $X$
of a simple normal crossing divisor
$D_1\cup\dots\cup D_\ell$, with which a smooth subvariety $Y\subset X$ has
normal crossing, such that $Y\cap U=V$.
We have $[U]$, given by the formula \eqref{eqn.U}.
For $[V\times \bP^{n-d}]$ we have the embedding in $Y\times \bP^{n-d}$,
complement to the simple normal crossing divisor
$$
(D_1\cap Y)\times \bP^{n-d}\cup\dots\cup (D_\ell\cap Y)\times \bP^{n-d},
$$
and thus an analogous formula in $\Burn_n$.
The blow-up $B\ell_YX$ has the simple normal crossing divisor
$\widetilde{D}_1\cup\dots\cup \widetilde{D}_\ell\cup E$,
where $\widetilde{D}_i$ denotes the proper transform of $D_i$,
and $E$, the exceptional divisor, leading to
a formula for $[U\smallsetminus V]$ in $\Burn_n$.
Comparing formulas and using that any intersection not involving $E$
is birational to the corresponding intersection in $X$, while any
intersection involving $E$ is birational to the product of
an intersection in $Y$ with projective space of the appropriate dimension,
we obtain the desired relation.

That the composite of the forward and reverse maps, in either order, is
the identity, is clear for the composite $\Burn_n\to \Burn_n$, and for
the other, comes down to iterated application of the modified scissors relations.
\end{proof}

\section{Burnside group for stacks}
\label{sec.hebg}
In this section we introduce a variant of the equivariant Burnside group
which is adapted to the setting of orbifolds.

\begin{defi}
\label{def.ocB}
We define the $\Z[t]$-module $\ocB$ by starting with
the free $\Z$-module on pairs $(A,S)$ consisting of a
finite abelian group $A$ and finite generating system $S$ of $A$,
where the action of $t$ is to append the element $0$ to $S$,
and passing to the quotient by the following relations:
\begin{itemize}
\item $(A,S)$ and $(A,S')$ are equivalent if $S'$ is a permutation of $S$.
\item $(A,S)$ and $(A',S')$ are equivalent if some
isomorphism $A\cong A'$ transforms $S$ to $S'$.
\item $(A,S)$, $S=(a_1,\dots,a_m)$, is equivalent, for any $2\le j\le m$, to
\begin{align*}
\sum_{\emptyset\ne I\subset \{1,\dots,j\}} &
(-t)^{|I|-1}
\Big(A/\langle a_i{-}a_{i_0}\rangle_{i\in I},\\
&(\bar a_{i_0},\bar a_1{-}\bar a_{i_0},\dots\text{(omitting all $i\in I$)}\dots,
\bar a_j{-}\bar a_{i_0},\bar a_{j+1},\dots,\bar a_m)\Big),
\end{align*}
where inside the sum $i_0$ denotes an element of $I$,
with sequence of elements of
$A/\langle a_i{-}a_{i_0}\rangle_{i\in I}$ of length $1+(j-|I|)+(m-j)$
that is independent of the choice of $i_0$.
\end{itemize}
\end{defi}

We let $[A,S]$ denote the class in $\ocB$ of a pair $(A,S)$.
The natural grading on $\Z[t]$ yields
a grading on $\ocB$ that assigns
degree $|S|$ to $[A,S]$:
\[ \ocB=\bigoplus_{n=0}^\infty \ocB_n. \]
Representations determine elements of $\ocB$:
if $G$ is a finite diagonalizable group scheme with
representation
$$
\rho\colon G\to GL_n
$$
(over an arbitrary field), then there is
an associated element
\[ [\rho]\in \ocB_n, \]
given by the Cartier dual group, with the
sequence of weights supplied by a decomposition of $\rho$ as a sum of $n$
one-dimensional linear representations.

Restricting to $e$-torsion groups $A$ for a positive integer $e$, respectively,
to $p$-primary $A$ for a prime number $p$, leads to a $\Z[t]$-module
$\ocB^{[e]}$, respectively $\ocB^{(p)}$.
The evident homomorphisms from these modules to $\ocB$ are
split monomorphisms, with splittings given by
$$
[A,S]\to [A/eA,S],\qquad\text{respectively,}\qquad [A,S]\to [A(p),S],
$$
where $A(p)$ denotes the
$p$-primary subgroup of $A$.
We have
\[ \ocB=\bigoplus_p \ocB^{(p)},\qquad
\ocB^{(p)}=\varinjlim_j \ocB^{[p^j]}. \]

\begin{defi}
\label{def.oBurn}
Let $k$ be a field of characteristic zero
and $n$ a natural number.
The group
$$
\oBurn_n
$$
is the
abelian group generated by pairs $(K,\alpha)$, where
\begin{itemize}
\item
$K$ is a field
of transcendence degree $d\le n$ over $k$ and
\item
$\alpha\in \ocB_{n-d}$,
\end{itemize}
modulo the identification of $(K(t),\beta)$ and $(K,t\beta)$ for
$\beta\in \ocB_{n-d-1}$.
\end{defi}

\begin{exam}
\label{exa.oBurn}
For $\ocB_2^{[5]}$ we have generators
$t^2[0,()]$,
$t[C_5,(1)]$,
$[C_5,(1,1)]$,
$[C_5,(1,2)]$,
$[C_5,(1,4)]$,
$[C_5\oplus C_5,((1,0),(0,1))]$, and relations:
\begin{align*}
t[C_5,(1)]&=[C_5,(1,4)]+t[C_5,(1)]-t^2[0,()], \\
[C_5,(1,1)]&=2t[C_5,(1)]-t[C_5,(1)], \\
[C_5,(1,2)]&=[C_5,(1,1)]+[C_5,(1,2)]-t^2[0,()], \\
[C_5,(1,4)]&=2[C_5,(1,2)]-t^2[0,()], \\
[C_5\oplus C_5,((1,0),(0,1))]&=
2[C_5\oplus C_5,((1,0),(0,1))]-t[C_5,(1)],
\end{align*}
where $C_5=\Z/5\Z$.
We deduce
\[ [C_5\oplus C_5,((1,0),(0,1))]=[C_5,(1,1)]=[C_5,(1,4)]
=t[C_5,(1)]=t^2[0,()], \]
with
\[ 2\big([C_5,(1,2)]-t^2[0,()]\big)=0. \]
As may be seen directly, or by application of Theorem \ref{thm.projective},
below, over an algebraically closed field of characteristic zero,
among rational orbifold surfaces whose only nontrivial stabilizer groups have
order $5$, the parity of the number of isolated points with $C_5$-stabilizer and
unequal weights not summing to zero remains invariant under blow-ups of points.
As noted in \cite[Exa.\ 4.3]{bergh}, it is not possible to eliminate such an
isolated point with $C_5$-stabilizer just with blow-ups of points.
\end{exam}

\section{Birational invariants of orbifolds}
\label{sec.birat}
In this section we introduce new birational invariants of $n$-dimensional
orbifolds over a field $k$ of characteristic zero,
taking values in $\oBurn_n$.

Let $\cX$ be an orbifold.
We recall from \cite{bergh} (see also \cite{berghrydh}):
if $D_1\cup\dots\cup D_\ell$ is a
simple normal crossing divisor on $\cX$, then
$\cX$ is called \emph{divisorial} with respect to
$D_1$, $\dots$, $D_\ell$
if the morphism
$$
\cX\to B\G_m^\ell,
$$
determined by
$\cO_{\cX}(D_i)$, for $i=1$, $\dots$, $\ell$, is representable.
We will apply this terminology more generally to any
a finite collection of line bundles.

Divisorialification is a procedure
that, when applied to an orbifold $\cX$,
yields a succession of blow-ups along smooth centers
\[ \cY\to\dots\to \cX, \]
such that $\cY$ is divisorial with respect to a
suitable simple normal crossing divisor.
This is given as Algorithm C in \cite{bergh},
initially with a requirement to have abelian geometric stabilizer groups,
later with this requirement removed \cite{berghrydh}.

As explained in the introduction,
invariance under birational projecive morphisms is the statement of
invariance under the equivalence relation of existence of a third
object (variety or Deligne-Mumford stack) with birational projective morphisms
to two given objects.
In this section we are interested in
quasiprojective orbifolds $\cX$ and $\cX'$, and the equivalence takes the form
of existence of a Deligne-Mumford stack $\cY$ with birational projective morphisms
\[ \cY\to \cX\qquad\text{and}\qquad \cY\to \cX'. \]
There is no loss of generality in supposing $\cY$ as well to be an orbifold,
since resolution of singularities in a
functorial form as in \cite{villamayor} and \cite{bierstonemilman} is
applicable to algebraic stacks.
Here we remind the reader that when $\cX$ and $\cY$ are
quasiprojective orbifolds, a representable morphism $\cY\to \cX$
is projective if and only if it is proper.
(Every projective morphism is proper.
The reverse implication uses that
$\cY\to \cX$ factors up to  a $2$-isomorphism
through $\cX\times_XY$, where $X$ and $Y$ denote the respective
coarse moduli spaces, that
$\cX\to X$ and $\cY\to Y$ induce bijections on geometric points,
and that a representable proper morphism inducing a bijection on
geometric points is finite, hence projective.)

\begin{theo}
\label{thm.projective}
Let $k$ be a field of characteristic zero,  $n$ a natural number, and
$\cX$  an $n$-dimensional quasiprojective orbifold over $k$.
The following recipe, assigning to $\cX$ a class $[\cX]\in \oBurn_n$
gives an invariant under representable birational projective morphisms:
\begin{itemize}
\item Use divisorialification to replace $\cX$ by a quasiprojective orbifold
$\cY$ that is divisorial
with respect to some finite collection of line bundles.
\item Stratify $\cY$ by the isomorphism type of the geometric stabilizer group
and attach to each component the normal bundle:
\[ \cY=\coprod_G \cY_G,\qquad N_{Y,G}=N_{\cY_G/\cY}. \]
\item Writing the coarse moduli space of $\cY_G$, for each $G$, as
$Y_G$, we assign the element
\[ [\cX]:=\sum_G ([Y_G],[N_{Y,G}])\in \oBurn_n. \]
\end{itemize}
In the last step,
if $Y_G$ is irreducible of dimension $d$, then we understand
$[Y_G]$ to be the associated element of $\Burn_d$, with
$[N_{Y,G}]\in \ocB_{n-d}$ associated to the representation of $G$ at the
geometric generic point of $\cY_G$.
In general, we understand $([Y_G],[N_{Y,G}])$ to be the sum of
the elements of $\oBurn_n$ attached to the irreducible components.
\end{theo}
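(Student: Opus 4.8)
The plan is to check two independent things: (i) that the recipe is well-defined, i.e.\ independent of the choice of divisorialification $\cY$ and of the chosen decompositions into one-dimensional representations; and (ii) that the resulting class is unchanged under a representable birational projective morphism $\cY' \to \cY$ of orbifolds. For (i), the independence from the chosen weight decomposition and from relabeling components is exactly absorbed by the permutation and isomorphism relations in Definition \ref{def.ocB}, so the only real content is independence from the divisorialification. Here I would appeal to the functoriality and uniqueness statements for Algorithm C in \cite{bergh,berghrydh}: any two divisorialifications of $\cX$ are dominated by a common one obtained by further blow-ups in smooth centers, reducing (i) to a special case of (ii).

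For (ii), weak factorization in the functorial, stack-theoretic form of \cite{AKMW,abramovichtemkin} reduces the problem to a single blow-up $\cY' = B\ell_{\cZ}\cY$ along a smooth center $\cZ \subset \cY$, where moreover one may assume $\cZ$ has normal crossing with the divisorial simple normal crossing divisor witnessing that $\cY$ is divisorial, so that $\cY'$ is again divisorial with respect to the proper transforms together with the exceptional divisor $\cE$. Then I would compare the two stratifications by geometric stabilizer group. Away from $\cE$, the morphism $\cY' \to \cY$ is an isomorphism of strata, so those contributions match. Along $\cE$, over a stratum $\cZ_H$ of $\cZ$ with stabilizer $H$ and generic normal representation with weight sequence $(a_1,\dots,a_m)$ (the codimension of $\cZ$ in $\cY$), the exceptional divisor and the loci where proper transforms of the $D_i$ meet it stratify into pieces each of which is birational to a projective-space bundle over a locally closed piece of $\cZ_H$, with new stabilizer groups the quotients $H/\langle a_i - a_{i_0}\rangle_{i \in I}$ and shifted weight sequences. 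The point is that the projective-space bundle directions get absorbed by the $\Z[t]$-module structure (the identification of $(K(t),\beta)$ with $(K,t\beta)$ in Definition \ref{def.oBurn}, together with the scissors presentation of Proposition \ref{prop.scissors}), while the combinatorics of which sub-weights get killed is precisely the inclusion-exclusion in the third relation of Definition \ref{def.ocB}.

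So the crux is a bookkeeping identity: the change in $[\cX]$ across the blow-up is
\[ \sum_{H} \sum_{\cZ_H} \Big( \text{(new contributions along $\cE$)} - \text{(old contribution over $\cZ_H$)} \Big) = 0, \]
and I would verify that this is exactly the relation in $\ocB$ obtained by applying the third bullet of Definition \ref{def.ocB} with $j = m$ to the pair $(H, (a_1,\dots,a_m))$, after pushing everything forward to $\oBurn_n$ along $[\cZ_H]$ and using that $(K(t),\beta)$ and $(K, t\beta)$ are identified. Concretely, the stratum $\cE \cap \bigcap_{i \in I}\widetilde{D}_i$ over $\cZ_H$ carries stabilizer $H/\langle a_i - a_{i_0}\rangle_{i \in I}$ with the indicated shifted weights, it is a $\bP^{m - |I|}$-bundle over the relevant locally closed piece of $\cZ_H$, and the signs $(-t)^{|I|-1}$ match the alternating signs coming from the inclusion-exclusion over which exceptional intersections one lands in, combined with the $t$'s recording projective-space factors. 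One subtlety to handle carefully is that, to match up with the definitions precisely, the normal bundle representation of $H$ along $\cZ_H$ is being taken at the \emph{geometric} generic point, so that it really is diagonalizable with a well-defined multiset of weights; divisoriality of $\cY$ is what guarantees that the line bundles $\cO(D_i)$ give honest characters, hence that the weights $a_i$ are the classes in the character group that appear in Definition \ref{def.ocB}.

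I expect the main obstacle to be the verification of this combinatorial identity in full generality, in particular keeping straight (a) the matching of the alternating inclusion-exclusion signs on the geometric side with the $(-t)^{|I|-1}$ on the algebraic side, and (b) the fact that several distinct subsets $I$ can give rise to the same quotient group and weight sequence, so that one is really checking equality of two expressions in $\ocB$ rather than a term-by-term match; the independence-of-$i_0$ assertion built into Definition \ref{def.ocB} is what makes this consistent, and I would want to isolate that as a small separate lemma. By comparison, the reductions via divisorialification uniqueness and weak factorization are standard once the functorial forms are invoked, and the projective-space-bundle absorption is a direct consequence of Proposition \ref{prop.scissors} applied fiberwise.
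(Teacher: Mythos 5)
Your proposal follows essentially the same route as the paper: divisorialize, use functorial weak factorization to reduce to a single blow-up along a smooth center between divisorial orbifolds, match the contributions away from the center, and absorb the discrepancy over the center via the scissors presentation of Proposition \ref{prop.scissors}, the identification $(K(t),\beta)=(K,t\beta)$, and the third relation of Definition \ref{def.ocB}; the paper is in fact no more explicit than you are at the combinatorial crux.

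Two corrections to your reductions, and one refinement at the crux. First, there is no uniqueness or cofinality statement for Algorithm C of the kind you invoke (``any two divisorialifications are dominated by a common one by further smooth blow-ups''), and none is needed: the paper compares the two divisorial models by desingularizing the closure, in the fiber product, of an open substack where the maps are isomorphisms, obtaining a common orbifold with representable birational projective morphisms to both, and then applies the single-blow-up comparison to each arm. Likewise, you do not get to assume the centers of the weak factorization have normal crossings with a chosen snc divisor; the paper sidesteps this by phrasing divisoriality in terms of line bundles, which is inherited by every intermediate stack of the factorization simply because each maps representably to $\cY$. Second, at the crux the relation of Definition \ref{def.ocB} must be applied not to the pair $(H,(a_1,\dots,a_m))$ formed from the center's normal bundle, but to the full zero-padded sequence $t^{c}[N_{Y,G}]$ attached to each stratum $\cY_G$ meeting the center $\cV$ (length $n-\dim(\cY_G\cap\cV)$), with $j$ equal to $\codim(\cV\subset\cY)$ and the first $j$ entries the weights of $G$ on $N_{\cV/\cY}$ at the generic point of $\cY_G\cap\cV$; these include exactly $c$ zeros when the stratum has directions normal to the center, and the remaining entries are the weights of $G$ on directions of $\cV$ normal to the stratum. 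Moreover, for the signs to close up you must use, for the open parts of the exceptional strata, the boundary-corrected classes of \eqref{eqn.U}/Proposition \ref{prop.scissors} rather than bare function-field classes (e.g.\ $[\G_m]=-[\bP^1]$ in $\Burn_1$): it is these built-in inclusion--exclusion signs that pair with $(-t)^{|I|-1}$, and with the naive function-field reading the identity already fails for the blow-up of an isolated $C_5$-point of weights $(1,2)$ on a surface. (Also, the new stabilizers are the subgroups $H_I\subset H$ cut out by the characters $a_i-a_{i_0}$, whose \emph{character group} is $\widehat{H}/\langle a_i-a_{i_0}\rangle_{i\in I}$; writing $H/\langle a_i-a_{i_0}\rangle$ conflates the group with its dual, harmless in $\ocB$ but worth stating correctly.)
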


\begin{proof}
Let $\cX'$ be a quasiprojective orbifold with
representable birational projective morphism to $\cX$.
We divisorialize $\cX'$ to obtain $\cY'$.
The diagram
\[
\xymatrix{
& \cY' \ar[d] \\
\cY \ar[r] & \cX
}
\]
may be completed to a $2$-commutative square of
representable birational projective morphisms of
quasiprojective orbifolds by desingularizing the
closure in the fiber product
of a nonempty open substack where the morphisms are isomorphisms.
This way, we are reduced to showing that for a representable birational projective morphism
$\cZ\to \cY$
of quasiprojective orbifolds we have
\begin{equation}
\label{eqn.YeqZ}
\sum_G ([Y_G],[N_{Y,G}])=\sum_G ([Z_G],[N_{Z,G}]) \in \oBurn_n.
\end{equation}

Let $\cL_1$, $\dots$, $\cL_\ell$ be line bundles, relative to which
$\cY$ is divisorial.
The functorial form of weak factorization
in \cite{abramovichtemkin} is applicable to stacks and
yields a factorization of
$\cZ\to \cY$ as a composite of maps
\emph{of divisorial projective orbifolds}
(with respect to pullbacks of
$\cL_1$, $\dots$, $\cL_\ell$), each equal to or inverse to
a blow-up along a smooth center.

Let $\cV$ be a smooth closed substack of $\cY$ of dimension $d<n$, with
coarse moduli space $V$,
and let $\cZ=B\ell_{\cV}\cY$.
We verify \eqref{eqn.YeqZ} in this case.
On the left, we break up $\cY_G$ into the unions of components
$\cY'_G$ disjoint from $\cV$ and
$\cY''$, meeting $\cV$ nontrivially, and
apply the modified scissors relation to $\cY''_G$:
\begin{align*}
&\sum_G ([Y_G],[N_{Y,G}])=
\sum_G ([Y'_G],[N_{Y,G}]) \\
&\,\,+\sum_G
([Y''_G\cap V,t^{\dim(\cY''_G)-\dim(\cY''_G \cap \cV)}[N_{Y,G}])
+\sum_G ([Y''_G\smallsetminus V],[N_{Y,G}]),
\end{align*}
where in the second sum on the right, the dimensions are understood to be
taken componentwise.
Breaking up the sum on the right of \eqref{eqn.YeqZ} in a similar fashion,
we obtain an expression with identical first and third sums and
a second sum that differs from the second sum in the expression above
by relations in $\ocB$.
\end{proof}

\begin{exam}
\label{exa.stackiness}
Functorial destackification \cite{bergh} of an orbifold provides a sequence of
blow-ups along smooth centers and root stack operations along smooth divisors
that simplify the stack structure.
The root stack operation adds stabilizer $\mu_n$ (for some positive integer $n$)
along a divisor \cite[\S 2]{cadman}, \cite[App.\ B]{AGV},
and the outcome of destackification is an orbifold that
is obtained from a smooth variety by iterating root stack operations along
components of a simple normal crossing divisor.
Blow-ups alone are, as noted in Example \ref{exa.oBurn},
insufficient to bring a general orbifold into this form.
Correspondingly, we view the quotient
$
\ocB/\cC,
$
where $\cC$ denotes the
submodule generated by the classes of pairs
\[ (C_{a_1}\oplus\dots\oplus C_{a_r},(g_1,\dots,g_r)) \]
of direct sums of finite cyclic groups and
tuples of generators, as an invariant of an orbifold
up to smooth blow-ups and root stacks.
We have
$$
\ocB^{[p]}\subset \cC\qquad\text{for}\qquad p\in \{2,3\},
$$
since
blow-ups suffice for the destackification in these cases
\cite{kreschdestackification}, \cite{oesinghaus}.

Table \ref{table1}, which records the
isomorphism type of
$\ocB_2^{[p]}/(\cC\cap \ocB_2^{[p]})$ for some primes $p\ge 5$,
reveals a pattern that we are able to confirm.
\begin{table}
\[
\begin{array}{rccrccrc}
p&\ocB_2^{[p]}/(\cC\cap \ocB_2^{[p]})&&
p&\ocB_2^{[p]}/(\cC\cap \ocB_2^{[p]})&&
p&\ocB_2^{[p]}/(\cC\cap \ocB_2^{[p]}) \\ \hline
5&\Z/2\Z&&17&\Z/2\Z\oplus\Z&&31&\Z^2 \\
7&0&&19&\Z&&37&\Z/2\Z\oplus\Z^2 \\
11&\Z&&23&\Z^2&&41&\Z/2\Z\oplus\Z^3 \\
13&\Z/2\Z&&29&\Z/2\Z\oplus\Z^2&&43&\Z^3
\end{array}
\]
\caption{Isomorphism type of $\ocB_2^{[p]}/(\cC\cap \ocB_2^{[p]})$}
\label{table1}
\end{table}

\begin{prop}
\label{prop.X0p}
For a prime $p\ge 5$ let 
$$
g=g(X_0(p))
$$ 
denote the genus of the modular curve, i.e., 
$$
g= \begin{cases} \big[\frac{p}{12}\big]\mp 1, & \text{when $p\equiv \pm 1$ $\mathrm{mod}$ $12$}, \\
\big[\frac{p}{12}\big], & \text{otherwise}.
\end{cases}
$$
Then
\[
\ocB_2^{[p]}/(\cC\cap \ocB_2^{[p]})\cong
\begin{cases}
\Z/2\Z\oplus \Z^g, & \text{if $p\equiv 1$ $\mathrm{mod}$ $4$}, \\
\Z^g, & \text{if $p\equiv 3$ $\mathrm{mod}$ $4$}.
\end{cases}
\]
\end{prop}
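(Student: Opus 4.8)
The plan is to compute $\ocB_2^{[p]}/(\cC\cap \ocB_2^{[p]})$ by hand, starting from the explicit presentation of $\ocB^{[p]}$ in Definition \ref{def.ocB}, restricted to $p$-torsion groups, and in degree $2$. The relevant generators are $t^2[0,()]$, the classes $t[C_p,(a)]$ for $a\in(\Z/p\Z)^\times/\{\pm1\}$ (but up to automorphism of $C_p$ there is only one such, so really just $t[C_p,(1)]$), the classes $[C_p,(a,b)]$ for pairs $(a,b)$ of nonzero elements, and $[C_p\oplus C_p,((1,0),(0,1))]$. Quotienting by $\cC$ kills everything cyclic-generated, so the classes $t[C_p,(1)]$, $[C_p,(a,b)]$ with at least one of $a,b$ a generator (which for $C_p$ is every nonzero element) and $[C_p\oplus C_p,((1,0),(0,1))]$ all map to zero in the quotient. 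Wait — one must be careful: after taking the quotient by $\cC$, the surviving classes are exactly those $[C_p,(a,b)]$ that cannot be generated; but for $C_p$ with $p$ prime, any nonzero element generates, so $(a,b)$ is always a generating system and thus every $[C_p,(a,b)]\in\cC$. This would make the quotient trivial, contradicting the table. So the correct reading must be that $\cC$ is generated by classes of pairs where the tuple genuinely generates, and the surviving part of $\ocB_2^{[p]}$ comes from the \emph{relations}: even though each generator lies in $\cC$, the relations in $\ocB$ impose $\Z$-linear conditions, and the quotient $\ocB_2^{[p]}/(\cC\cap\ocB_2^{[p]})$ is computed by the cokernel of the relation matrix restricted appropriately. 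The honest approach: present $\ocB_2^{[p]}$ as $\Z^N/(\text{relations})$ with $N$ the number of generators, identify the sublattice $\cC\cap\ocB_2^{[p]}$, and compute the quotient.

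**Reducing to a lattice computation and bringing in modular curves.** First I would enumerate: the number of $[C_p,(a,b)]$ with $a,b\in(\Z/p\Z)^\times$, modulo permutation $(a,b)\sim(b,a)$ and modulo $\Aut(C_p)=(\Z/p\Z)^\times$ acting diagonally; this count is governed by the ratio $b/a\in(\Z/p\Z)^\times$ modulo inversion $r\sim r^{-1}$, of which there are $(p+1)/2$ values, together with the "diagonal" case $a=b$ versus $a=-b$ being special. The remaining generators $t^2[0,()]$, $t[C_p,(1)]$, $[C_p\oplus C_p,\dots]$ contribute a bounded correction. Then I would write out the relations from the third bullet of Definition \ref{def.ocB} with $m=2$, $j=2$: each gives $[C_p,(a,b)] = [C_p,(a,b)] + (\text{class of }C_p/\langle a-b\rangle,\dots) - t\,(\cdots)$, i.e. a relation expressing the quotient-by-$\langle a-b\rangle$ class. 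Since $a-b$ may be zero (when $a=b$) giving the quotient $C_p$ itself, or nonzero giving the trivial group, these relations interlock the two-dimensional classes with the lower ones. The key identity to extract is that modulo $\cC$, the class $[C_p,(1,r)]-t^2[0,()]$ depends only on the pair $\{r,r^{-1}\}$ and satisfies a relation of the shape $[C_p,(1,r)]+[C_p,(1,r')] \equiv [C_p,(1,rr')]+t^2[0,()]$-type cocycle conditions coming from the $j=2$ relations applied to triples — these are exactly the relations that, in the theory of $\Br$ or of the Manin symbols / modular symbols for $X_0(p)$, cut out the space whose dimension is $2g$ (or $g$), with the $\Z/2\Z$ factor coming from the self-inverse element $r=-1$, i.e. the weight pair $(1,2)$ with $2\equiv -1$... no: $r=-1$ is the unique self-inverse element besides $1$, corresponding to $(a,b)=(1,-1)$, weights summing to zero — but Example \ref{exa.oBurn} shows it is precisely $[C_5,(1,2)]$, weights \emph{not} summing to zero, that gives the $2$-torsion. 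I would track this carefully: the $2$-torsion class is $[C_p,(1,r_0)]-t^2[0,()]$ where $r_0$ is the (unique mod $p$, when $p\equiv1\bmod4$) square root of $-1$, since then the transposition $(a,b)\mapsto(b,a)$ composed with an automorphism fixes $(1,r_0)$ and forces $2\times(\text{class})=0$. When $p\equiv3\bmod4$ there is no such $r_0$ and the $2$-torsion disappears — this is the dichotomy in the statement.

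**Identifying the free rank with the genus.** The free rank $g$ should emerge as $\dim$ of the quotient of the $\Q$-vector space spanned by symbols $\{r\}$, $r\in(\Z/p\Z)^\times$, modulo $\{r\}=\{r^{-1}\}$ and the two- and three-term relations, which is a classical presentation of $H_1$ of $X_0(p)$ — Manin's relations for modular symbols. Concretely, I expect the relations force $\{r\} + \{-1/r\} = 0$ and $\{r\} + \{1-1/r\}... $ actually the precise Manin relations are generated by the involutions $S\colon r\mapsto -1/r$ and $U\colon r\mapsto 1/(1-r)$ of order $3$, and Manin's theorem identifies the resulting quotient with $H_1(X_0(p),\partial;\Q)$ of dimension $2g+1$ (relative homology), or $2g$ for the cuspidal part; after accounting for the one-dimensional pieces coming from $t$-multiples and the edge/boundary symbols the net free rank lands on $g=g(X_0(p))$. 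The main obstacle — the step I expect to be genuinely hard — is making the dictionary between the $j=2$ relations of Definition \ref{def.ocB} applied to $m=3$ (three-element generating systems that, after one quotient step, reduce to two-element systems) and Manin's three-term relation $U$ precise, including correctly counting the surviving $\Z$-summands so that the answer is exactly $\Z^g$ and not $\Z^{g\pm\text{const}}$. I would organize the proof as: (1) the reduction to a presentation by symbols $\{r\}$ modulo $r\sim r^{-1}$ with the $2$-torsion class isolated; (2) identification of the induced relations with Manin's relations for $\Gamma_0(p)$; (3) citing Manin's computation (which is the content of Section \ref{sec.manin}, as the excerpt promises) to read off the rank $g$ and the $p\equiv1\bmod 4$ torsion; and (4) assembling the cases. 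This defers the hardest bookkeeping to the dedicated section, matching the paper's announced structure.
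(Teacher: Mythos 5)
Your overall instinct---to convert the presentation of $\ocB_2^{[p]}/(\cC\cap \ocB_2^{[p]})$ into Manin-type symbol relations for $\Gamma_0(p)$---is indeed the paper's strategy, but your write-up has two genuine problems. First, your reading of $\cC$ never gets fixed: the generators of $\cC$ are pairs in which the tuple has exactly one entry per cyclic summand (so in degree $2$ and $p$-torsion they are $t^2[0,()]$, $t[C_p,(1)]$ and $[C_p\oplus C_p,((1,0),(0,1))]$, together with classes such as $[C_p,(1,1)]$ and $[C_p,(1,p-1)]$ that fall into $\cC$ via the relations), whereas $[C_p,(a,b)]$ with $a,b\neq 0$ is \emph{not} a generator of $\cC$ even though $(a,b)$ generates $C_p$. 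Your fallback reading, in which every degree-$2$ generator lies in $\cC$ and the quotient is somehow carried by the relations, would literally give the zero group. Relatedly, your mechanism for the $2$-torsion is not a proof: a tuple being fixed by a transposition composed with an automorphism yields the trivial relation ``class $=$ itself'' and forces nothing; the relation $2[C_p,(1,2)]=0$ arises from combining the automorphism identification $[C_p,(1,a)]=[C_p,(1,a^{-1})]$ with the antisymmetry $[C_p,(1,a)]=-[C_p,(1,p-a)]$, which itself has to be derived inductively from the blow-up relations, and one must still show this $2$-torsion class is nonzero and splits off when $p\equiv 1 \bmod 4$.

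Second, and more seriously, ``citing Manin's computation'' cannot finish the proof, because Manin computes $H_1$ (of $X_0(p)$, or with the orbifold modification, of $X_0(p)_{\mathrm{orb}}$), which has free rank $2g$, while the group you need is the quotient of $H_1(X_0(p)_{\mathrm{orb}},\Z)$ by the extra relations $\{0,\frac1a\}+\{0,\frac1{p-a}\}=0$, i.e.\ by all sums of a cycle and its complex conjugate for the real structure on $X_0(p)$. Your suggestion that the discrepancy is absorbed by ``edge/boundary symbols'' or relative-homology corrections is wrong; the cuspidal $H_1$ already has rank $2g$, and the halving to $g$ is exactly where the real work lies. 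The paper handles it by noting that conjugation acts by $\{0,\frac1a\}\mapsto\{0,\frac1{p-a}\}$, that the real locus of $X_0(p)$ is connected, and by an equivariant-homology (group-homology for $\Z/2\Z$) computation showing $H_1\big(\Z/2\Z,H_1(X_0(p),\Z)\big)=0$, so the subgroup of elements $c+\bar c$ is saturated of rank $g$; the $\Z/2\Z$ for $p\equiv 1\bmod 4$ is then split off equivariantly using intersection number mod $2$ with a conjugation-invariant curve joining the two order-$2$ elliptic points. None of this appears in your plan, so as it stands the proposal establishes at most the dictionary with modular symbols, not the asserted isomorphism type.
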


The proof of Proposition \ref{prop.X0p}, based on computations with 
Manin's modular symbols \cite{maninparabolic}, is given in
the next section.

The entry $0$ in Table \ref{table1} for $p=7$ indicates that
$\ocB_2^{[7]}\subset \cC$.
In fact, we have
$\ocB_3^{[7]}\subset \cC$ as well.
But we find
$$
\ocB_4^{[7]}/(\cC\cap \ocB_4^{[7]})\cong \Z/2\Z.
$$
\end{exam}

\section{Modular symbols and the proof of Proposition \ref{prop.X0p}}
\label{sec.manin}
The equivariant Burnside group introduced in
\cite{kontsevichpestuntschinkel} is shown to exhibit
a tantalizing connection with the modular curves $X_1(N)$ for
various $N$.
Here, we see the appearance of the modular curves
\[ X_0(p)=\Gamma_0(p)\backslash\upperhalfplane\cup \{\mathrm{0,\infty}\} \]
and the corresponding modular symbols \cite{maninparabolic}. 

Fix a prime $p\ge 5$; we are interested in the isomorphism type
of the abelian group
$$
\ocB_2^{[p]}/(\cC\cap \ocB_2^{[p]})
$$
with generators
\[ [C_p,(1,a)],\qquad \,2\le a\le p-2, \]
and relations
\begin{align}
[C_p,(1,a)]&=[C_p,(1,a^{-1})]\qquad\text{for all $a$}, \label{rel1} \\
2[C_p,(1,2)]&=0, \label{rel2} \\
[C_p,(1,2)]&=-[C_p,(1,p-2)], \label{rel3} \\
[C_p,(1,a)]&=[C_p,(1,a-1)]+[C_p,(1,a^{-1}-1)] \label{rel4} \\
&\textstyle\qquad\qquad\text{for $a\in
\big\{3,\dots,\frac{p-1}{2}\big\}\cup\big\{\frac{p+3}{2},\dots,p-2\big\}$},
\nonumber
\end{align}
where $a^{-1}$ denotes the positive integer less than $p$,
inverse to $a$ mod $p$.

The modular group
\[ \Gamma_0(p)=\bigg\{\begin{pmatrix}a&b\\c&d\end{pmatrix}\in \mathrm{SL}_2(\Z)\,\bigg|\,
c\equiv 0 \text{ mod } p \bigg\} \]
has index $p+1$ in $\mathrm{SL}_2(\Z)$, with right coset representatives
\[ \begin{pmatrix} 1&0\\0&1 \end{pmatrix},
\begin{pmatrix} 1&0\\1&1 \end{pmatrix},
\dots,
\begin{pmatrix} 1&0\\p-1&1 \end{pmatrix},
\begin{pmatrix} 0&-1\\1&0 \end{pmatrix}. \]
We let $\Gamma_0(p)$
act in the standard way on the upper half-plane $\upperhalfplane$ and as well on
$\Q\cup \{i\infty\}$, the latter with two orbits corresponding to
the cusps $0$, $\infty\in X_0(p)$.
Here, $0$ corresponds to
the set of all $b/d\in \Q$ with $p\nmid d$ and $\infty$, to
the set of $a/c\in \Q$ with $p\mid c$.
The real structure on $X_0(p)$ is determined by the standard complex conjugation
$\upperhalfplane\to \upperhalfplane$, $z\mapsto -\bar z$.
It is well known that the real locus of $X_0(p)$ is connected.

With Manin's modular symbols \cite{maninparabolic},
applied to $\Gamma_0(p)$, we get a presentation of
$H_1(X_0(p),\Z)$ by generators and relations.
Proposition \ref{prop.X0p} is established by showing that these
relations, together with the additional relations that the sum of any
cycle and its complex conjugate is zero, match the presentation
\eqref{rel1}--\eqref{rel4}.
In fact, we use a simpler set of
relations, which yield the homology not of the Riemann surface $X_0(p)$, but
rather of the corresponding \emph{orbifold curve} which carries
orbifold structure at elliptic points.
The quotient of $\upperhalfplane$ by $\Gamma_0(p)/\{\pm 1\}$ is an orbifold,
compactified by adding the cusps to obtain the orbifold curve
\[ X_0(p)_{\mathrm{orb}}. \]

Orbifolds and their topological invariants are explained, for instance, in
\cite{moerdijk}, while
a convenient reference for orbifold curves is \cite{behrendnoohi}.
However,
$H_1(X_0(p)_{\mathrm{orb}},\Z)$
may also be presented directly
as the homology of the complement of the
elliptic points, modulo the relation that an appropriate multiple of a
small loop around an elliptic point is zero.
When $p\equiv 1$ mod $4$ there is a complex conjugate pair of
elliptic points of $H_1(X_0(p)_{\mathrm{orb}},\Z)$ where the
stabilizer (of a representative point of $\upperhalfplane$) has
order $2$ in $\Gamma_0(p)/\{\pm 1\}$; for each of these, twice a small loop
is declared to be zero in homology.
When $p\equiv 1$ mod $6$ there is a complex conjugate pair of
elliptic points where the
stabilizer has order $3$ in $\Gamma_0(p)/\{\pm 1\}$, for which we declare
$3$ times a small loop to be zero in homology.

We summarize the needed results from \cite{maninparabolic},
modified appropriately to the orbifold setting.
We maintain the convention from \eqref{rel1}--\eqref{rel4} about
$a$ and $a^{-1}$ and, when $a\notin \{p-2,(p-1)/2\}$ define
positive integers $a'$ and $a''$ less than $p$ by the requirements
\[ a'\equiv -a^{-1}-1\text{ mod }p,\qquad
a''\equiv -(a+1)^{-1}\text{ mod }p. \]

\begin{lemm}[{\cite[(1.4)]{maninparabolic}}]
\label{lem.toH1}
A surjective homomorphipsm
\[ \Gamma_0(p)\to H_1(X_0(p)_{\mathrm{orb}},\Z) \]
is defined by sending $\gamma\in \Gamma_0(p)$ to the image
\[ \{0,\gamma\cdot 0\} \]
in $X_0(p)$
of a geodesic path in $\upperhalfplane\cup \Q$ from
$0$ to $\gamma\cdot 0$.
The kernel is generated by the commutator subgroup of
$\Gamma_0(p)$ and the parabolic elements of $\Gamma_0(p)$.
\end{lemm}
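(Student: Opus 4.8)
The plan is to deduce the lemma from the description of the first homology of an aspherical orbifold, following \cite{maninparabolic} with the sole modification that the elliptic points are \emph{retained} rather than resolved. Write $\bar\Gamma=\Gamma_0(p)/\{\pm I\}$. Since $\bar\Gamma$ acts properly discontinuously on the contractible space $\upperhalfplane$, the open orbifold curve $[\upperhalfplane/\bar\Gamma]$ has orbifold fundamental group $\bar\Gamma$ and, being aspherical, has first integral homology $\bar\Gamma^{\mathrm{ab}}$ (see \cite{moerdijk}, \cite{behrendnoohi}), a class being represented by the image of a path in $\upperhalfplane$ from a chosen base point $z_0$ of trivial stabiliser to $\gamma\cdot z_0$. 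The compact orbifold curve $X_0(p)_{\mathrm{orb}}$ is obtained by adjoining the cusps $0$ and $\infty$, which are points of trivial isotropy; a deleted neighbourhood of $\infty$, respectively $0$, has fundamental group generated by the image in $\bar\Gamma$ of $T=\left(\begin{smallmatrix}1&1\\0&1\end{smallmatrix}\right)$, respectively $P=\left(\begin{smallmatrix}1&0\\p&1\end{smallmatrix}\right)$, the generator of the stabiliser in $\bar\Gamma$ of the respective point of $\PP^1(\Q)$.

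First I would invoke the orbifold van Kampen theorem for the covering of $X_0(p)_{\mathrm{orb}}$ by $[\upperhalfplane/\bar\Gamma]$ and two small disks to get
\[ H_1(X_0(p)_{\mathrm{orb}},\Z)\;\cong\;\bar\Gamma^{\mathrm{ab}}\big/\langle\bar T,\bar P\rangle. \]
Then I would rephrase the right-hand side in terms of $\Gamma_0(p)$: every parabolic element of $\Gamma_0(p)$ fixes a point of $\PP^1(\Q)$, which is $\Gamma_0(p)$-equivalent to $0$ or to $\infty$, so is conjugate in $\Gamma_0(p)$ to $\pm P^{n}$ or $\pm T^{n}$; hence the subgroup generated by all parabolic elements is the normal closure of $T$, $P$, and $-I$ (itself parabolic, e.g.\ $-I=(-T)T^{-1}$). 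Pulling the displayed isomorphism back along $\Gamma_0(p)\twoheadrightarrow\bar\Gamma$ then shows that $\gamma\mapsto\bigl[\text{image of a path }z_0\to\gamma z_0\bigr]$ is a surjection $\Gamma_0(p)\to H_1(X_0(p)_{\mathrm{orb}},\Z)$ whose kernel is generated by the commutator subgroup and the parabolic elements.

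Finally I would identify this map with the modular-symbol prescription of the statement. Fix a path in $\upperhalfplane\cup\Q$ from the cusp $0$ to $z_0$; concatenating it with the path $z_0\to\gamma z_0$ and with its $\gamma$-translate (which runs from $\gamma z_0$ to $\gamma\cdot0$) produces a path from $0$ to $\gamma\cdot0$ whose image in $X_0(p)_{\mathrm{orb}}$ is homologous to the image of the geodesic from $0$ to $\gamma\cdot0$ --- that is, to $\{0,\gamma\cdot0\}$ --- because the two appended pieces are $\gamma$-translates of one another with $\gamma\in\Gamma_0(p)$, hence cancel in the quotient; and $\gamma\cdot0=b/d$ satisfies $p\nmid d$ (since $p\mid c$ and $\det\gamma=1$), so $\{0,\gamma\cdot0\}$ is a closed $1$-cycle, as the statement requires. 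The one genuinely delicate point, and the only place the orbifold formulation enters, is the claim that the filled-in cusps carry trivial isotropy while the elliptic points survive as honest orbifold points --- so that capping off the cusps kills exactly the parabolic classes and nothing coming from elliptic elements; this is precisely what distinguishes $H_1(X_0(p)_{\mathrm{orb}},\Z)$ from Manin's $H_1(X_0(p),\Z)$, and is why we do not quotient additionally by torsion.
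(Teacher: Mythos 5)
Your argument is correct, and it supplies something the paper deliberately omits: the paper states this lemma as a citation of Manin's (1.4) ``modified appropriately to the orbifold setting'' and gives no proof of the modification, whereas you actually derive it. Manin's original statement concerns $H_1(X_0(p),\Z)$ of the compact Riemann surface, where the kernel is generated by commutators, parabolic elements, \emph{and} elliptic elements; the whole content of the orbifold variant is that the elliptic elements survive, contributing the torsion recorded later in the section. Your route --- identify $\pi_1^{\mathrm{orb}}([\upperhalfplane/\bar\Gamma])$ with $\bar\Gamma=\Gamma_0(p)/\{\pm I\}$ via the properly discontinuous action on a contractible space, cap off the two cusps (which carry trivial isotropy) to kill exactly the classes of $\bar T$ and $\bar P$, reduce the parabolic subgroup to the normal closure of $T$, $P$, $-I$ using the two-cusp classification, and then translate the base-point path description into the modular symbol $\{0,\gamma\cdot 0\}$ by the standard cancellation of $\alpha$ against $\gamma\alpha$ --- is exactly the right adaptation, and the final check that $\gamma\cdot 0$ lies in the cusp class of $0$ (so the symbol is a cycle) is in place. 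Two small points of hygiene: $-I$ is not itself parabolic (it is central and acts trivially on $\upperhalfplane$); what your identity $-I=(-T)T^{-1}$ actually shows, and what you need, is that $-I$ lies in the subgroup generated by the parabolic elements, so that this subgroup coincides with the normal closure of $\{T,P,-I\}$ and the quotient by $\{\pm I\}$ causes no discrepancy. Also, since you pass through the normal closure while the statement speaks of the subgroup generated by the parabolics, you should note (as you implicitly do) that the parabolic elements form a conjugation-invariant set, so the subgroup they generate is already normal, and in any case normal closure and generated subgroup agree modulo the commutator subgroup. Neither point affects the validity of the argument.
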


\begin{lemm}[{\cite[(1.5)--(1.9)]{maninparabolic}}]
\label{lem.presH1}
The abelian group $H_1(X_0(p)_{\mathrm{orb}},\Z)$ is presented by generators
\[ \big\{0,\frac{1}{a}\big\},\qquad 2\le a\le p-2, \]
and relations
\begin{align}
\big\{0,\frac{1}{a}\big\}+\big\{0,\frac{1}{p-a^{-1}}\big\}&=0, \label{mrel1} \\
\big\{0,\frac{1}{a}\big\}+\big\{0,\frac{1}{a'}\big\}+\big\{0,\frac{1}{a''}\big\}
&=0, \label{mrel2} \\
\big\{0,\frac{1}{(p-1)/2}\big\}+\big\{0,\frac{1}{p-2}\big\}&=0.
\label{mrel3}
\end{align}
\end{lemm}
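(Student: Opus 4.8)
The plan is to transcribe Manin's computation in \cite{maninparabolic} for the group $\Gamma_0(p)$, paying attention to which homology group the resulting presentation computes. By Lemma \ref{lem.toH1}, the group $H_1(X_0(p)_{\mathrm{orb}},\Z)$ is the abelianization of $\Gamma_0(p)$ modulo the classes of its parabolic elements (the central element $-1$ acts trivially on the relevant geodesics, consistent with the orbifold being the quotient by $\Gamma_0(p)/\{\pm1\}$), with $\gamma$ represented by the modular symbol $\{0,\gamma\cdot 0\}$. The essential input is Manin's formalism of the symbols
\[ (\gamma):=\{\gamma\cdot 0,\gamma\cdot\infty\}, \]
which depend only on the coset $\Gamma_0(p)\gamma\in\Gamma_0(p)\backslash\mathrm{SL}_2(\Z)\cong\PP^1(\F_p)$, together with the relations $(\gamma)+(\gamma\sigma)=0$ and $(\gamma)+(\gamma\tau)+(\gamma\tau^2)=0$ for $\sigma=\left(\begin{smallmatrix}0&-1\\1&0\end{smallmatrix}\right)$ and $\tau=\left(\begin{smallmatrix}0&-1\\1&-1\end{smallmatrix}\right)$, which represent the elliptic conjugacy classes of orders $2$ and $3$; supplemented by the vanishing of the total boundary at the two cusps, these present the homology.

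For the explicit form one uses the coset representatives $\gamma_a=\left(\begin{smallmatrix}1&0\\a&1\end{smallmatrix}\right)$, $0\le a\le p-1$, and $\sigma$ of the excerpt, with $(\gamma_a)=\{0,1/a\}$ (and $1/0=\infty$); the coset of $\gamma_a$ corresponds to the point $a\in\PP^1(\F_p)$, right multiplication by $\sigma$ sends this index to $p-a^{-1}$, and right multiplication by $\tau$ sends it to $-(a+1)^{-1}\bmod p$. Feeding this into the two relations yields \eqref{mrel1} and \eqref{mrel2} with Manin's integers $a'\equiv-a^{-1}-1$, $a''\equiv-(a+1)^{-1}$ mod $p$. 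One then trims the generating set: the symbols with $a\in\{0,1,p-1\}$ are removed — $a=0$ gives the cusp-to-cusp path $\{0,\infty\}$, which is not a cycle, while $a=1$ and $a=p-1$ drop out using the $\sigma$-relation together with the cuspidal relations — leaving the stated generators with $2\le a\le p-2$, and the residual boundary relation at the cusps takes the form \eqref{mrel3}. Comparison with \cite[(1.5)--(1.9)]{maninparabolic} completes this step.

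The single place where the argument departs from \cite{maninparabolic} proper, and the point I expect to be the crux, is the claim that \eqref{mrel1}--\eqref{mrel3} present the homology of the \emph{orbifold} curve rather than of the Riemann surface $X_0(p)$. For this I would invoke the direct description recalled just before the lemma (see \cite{behrendnoohi}): $H_1(X_0(p)_{\mathrm{orb}},\Z)$ is the homology of the complement of the elliptic points modulo the relations that $k$ times a small loop about an elliptic point of order $k\in\{2,3\}$ vanishes. The relations $(\gamma)+(\gamma\sigma)=0$ and $(\gamma)+(\gamma\tau)+(\gamma\tau^2)=0$ are exactly the incarnations of these loop relations in Manin's basis: when $\gamma$ conjugates $\sigma$ (resp.\ $\tau$) into $\Gamma_0(p)$, i.e.\ at an elliptic point of order $2$ (resp.\ $3$), the relation degenerates to $2(\gamma)=0$ (resp.\ $3(\gamma)=0$) and nothing more is imposed; to descend to the coarse curve one would additionally require $(\gamma)=0$ at such $\gamma$, which we do not do. The torsion thus arises from the degenerate cases $a\equiv p-a^{-1}$, i.e.\ $a^2\equiv-1$ mod $p$ (possible exactly when $p\equiv1$ mod $4$), in \eqref{mrel1}, and $a\equiv a'\equiv a''$, i.e.\ $a^2+a+1\equiv0$ mod $p$ (possible exactly when $p\equiv1$ mod $3$), in \eqref{mrel2}; as a numerical check the presentation should yield $H_1(X_0(p)_{\mathrm{orb}},\Z)\cong\Z^{2g+1}\oplus(\Z/2)^{\nu_2}\oplus(\Z/3)^{\nu_3}$ with $\nu_2=1+\left(\frac{-1}{p}\right)$ and $\nu_3=1+\left(\frac{-3}{p}\right)$. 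The matrix computations for $\sigma$ and $\tau$ and the trimming of the generating set are routine by comparison, the one delicate piece there being the cuspidal relation \eqref{mrel3} and the need to track the two inequivalent cusps of $\Gamma_0(p)$.
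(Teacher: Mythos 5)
Your route is the same as the paper's, which offers no proof beyond the citation of Manin: one transcribes the symbol calculus for $\Gamma_0(p)\backslash\mathrm{SL}_2(\Z)\cong\PP^1(\F_p)$ and observes that the orbifold modification consists precisely in keeping the degenerate relations $2(\gamma)=0$, $3(\gamma)=0$ at the elliptic cosets instead of strengthening them to $(\gamma)=0$. Your matrix computations for $\sigma$ and $\tau$ (the index maps $a\mapsto p-a^{-1}$ and $a\mapsto -(a+1)^{-1}$) are correct, and your identification of the degenerate relations with the small-loop relations in the direct description of $H_1(X_0(p)_{\mathrm{orb}},\Z)$ recalled before the lemma is exactly the justification the paper has in mind.

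However, your closing ``numerical check'' is wrong, and the error exposes a real gap. The group presented by \eqref{mrel1}--\eqref{mrel3} must be $H_1(X_0(p)_{\mathrm{orb}},\Z)$, which from the standard presentation of the orbifold fundamental group equals $\Z^{2g}\oplus\big(\bigoplus_i\Z/m_i\big)/\langle(1,\dots,1)\rangle$, i.e.\ $\Z^{2g}$ plus a single cyclic group of order dividing $6$; this is the formula displayed later in Section \ref{sec.manin}, and a direct check of \eqref{mrel1}--\eqref{mrel3} gives $\Z/2$ for $p=5$ and $\Z/3$ for $p=7$. Your answer $\Z^{2g+1}\oplus(\Z/2)^{\nu_2}\oplus(\Z/3)^{\nu_3}$ has the rank of the \emph{relative} homology $H_1(X_0(p),\{0,\infty\};\Z)$ (two cusps give $2g+1$) and treats each elliptic point as an independent torsion summand. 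Both discrepancies come from the step you gloss over: the full symbol group on $\PP^1(\F_p)$ with the $\sigma$- and $\tau$-relations presents the relative homology, and the absolute homology is the \emph{kernel} of the boundary map, not a quotient --- your phrase ``supplemented by the vanishing of the total boundary'' describes the wrong operation. Identifying that kernel with the span of $\{0,1/a\}$, $2\le a\le p-2$, and checking that the induced relations are exactly \eqref{mrel1}--\eqref{mrel3} (the $\tau$-orbit $\{0,p-1,\infty\}$ forces $\{0,1/(p-1)\}=\{0,1\}=0$, whence the orbit $\{1,(p-1)/2,p-2\}$ degenerates to \eqref{mrel3}) is the substantive part of Manin's argument; once it is carried out, the global relation tying together the local torsion at the elliptic points is automatic and the count agrees with the paper.
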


Now the proof of Proposition \ref{prop.X0p} combines an algebraic result
with topological reasoning.

\begin{lemm}
\label{lem.algebraic}
An isomorphism
\begin{align*}
\ocB_2^{[p]}/(&\cC\cap \ocB_2^{[p]})\to \\
&H_1(X_0(p)_{\mathrm{orb}},\Z)
\Big/\Big\langle
\big\{0,\frac{1}{a}\big\}+\big\{0,\frac{1}{p-a}\big\},\,a\in
\{2,\dots,p-2\}\big\}\Big\rangle
\end{align*}
is given by $[C_p,(1,a)]\mapsto \{0,1/a\}$ for all $a$.
\end{lemm}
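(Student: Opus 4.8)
The plan is to realize both groups in the statement as quotients of the free abelian group $F$ on symbols $e_a$, $2\le a\le p-2$. Under the identification $e_a=[C_p,(1,a)]$ the source is $F/R_A$, where $R_A$ is generated by the relators of \eqref{rel1}--\eqref{rel4}; under $e_a=\{0,1/a\}$ the target is $F/R_B$, where, using the presentation of Lemma \ref{lem.presH1}, $R_B$ is generated by the relators of \eqref{mrel1}--\eqref{mrel3} together with the conjugation relators $\rho_a:=e_a+e_{p-a}$. Since the proposed map is the identity on the $e_a$, the lemma is equivalent to the assertion $R_A=R_B$. All index arithmetic is modulo $p$, and the computations use only the elementary identities $(p-2)^{-1}=(p-1)/2$, $(p-a)^{-1}=p-a^{-1}$ and $(a^{-1}-1)^{-1}=-a/(a-1)$; at each step one checks that the indices produced stay in $\{2,\dots,p-2\}$, the one exceptional value being $2^{-1}=(p+1)/2$.

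First I would verify $R_A\subseteq R_B$. The relator of \eqref{rel3} is $\rho_2$; the relator $e_a-e_{a^{-1}}$ of \eqref{rel1} is the difference of \eqref{mrel1} and $\rho_{a^{-1}}$; the relator $2e_2$ of \eqref{rel2} is assembled from \eqref{mrel3}, \eqref{rel1} at $a=p-2$, and $\rho_2$; and the relator $e_a-e_{a-1}-e_{a^{-1}-1}$ of \eqref{rel4} (for admissible $a$) equals, up to sign and modulo $R_B$, the relator of \eqref{mrel2} at index $b=a-1$, once its lower terms are rewritten as $e_{a^{-1}-1}$ (by \eqref{rel1}, via $(a^{-1}-1)^{-1}=-a/(a-1)$) and $-e_a$ (by $\rho_{a^{-1}}$ and \eqref{rel1}). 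Granting for the moment that $\rho_a\in R_A$ for all $a$, the opposite inclusion $R_B\subseteq R_A$ is then obtained by running these computations backwards: \eqref{mrel1} follows from \eqref{rel1} and $\rho_{a^{-1}}$, \eqref{mrel3} from \eqref{rel1}, \eqref{rel2} and \eqref{rel3}, and \eqref{mrel2} at $b$ from \eqref{rel4} at $a=b+1$ together with \eqref{rel1} and the $\rho$'s.

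It remains to show $\rho_a\in R_A$ for all $a$, which I regard as the crux. Since $R_A$ contains the \eqref{rel1}-relators, it suffices to prove $s_a:=e_a+e_{-a^{-1}}\in R_A$, and one checks directly from \eqref{rel1} that $s_a\equiv s_{a^{-1}}\pmod{R_A}$. The key step is that
\[ s_a-s_{a-1}\in R_A\qquad\text{for every}\qquad a\in\{3,\dots,p-2\}\setminus\{(p+1)/2\}. \]
Indeed, \eqref{rel4} at $a$ gives $e_a-e_{a-1}\equiv e_{a^{-1}-1}$, while \eqref{rel4} at $-(a-1)^{-1}$ gives $e_{-(a-1)^{-1}}\equiv e_{-a/(a-1)}+e_{-a}$; since $(a^{-1}-1)^{-1}=-a/(a-1)$ and $(-a)^{-1}=-a^{-1}$, two applications of relation \eqref{rel1} cancel the remaining terms of $s_a-s_{a-1}$ into $R_A$. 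A descending induction on $a$, jumping over the excluded value $(p+1)/2=2^{-1}$ by means of $s_{(p+1)/2}\equiv s_2$, then reduces everything to $s_2=e_2+e_{(p-1)/2}$, which equals $e_2+e_{p-2}$ modulo $R_A$ by \eqref{rel1} and therefore lies in $R_A$ by \eqref{rel3}. Hence each $s_a$, and so each $\rho_a$, is in $R_A$, and $R_A=R_B$. The delicate point throughout is exactly this telescoping: the modular inversion identities must be combined in precisely the right order, and one must check at each use of \eqref{rel1} and \eqref{rel4} that all indices are admissible — the lone troublesome index $2^{-1}=(p+1)/2$ being dispatched separately by the symmetry $s_a\equiv s_{a^{-1}}$.
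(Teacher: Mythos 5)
Your proof is correct, and its overall strategy coincides with the paper's: both arguments reduce the lemma to matching the two lists of relators inside the free abelian group on the symbols indexed by $a\in\{2,\dots,p-2\}$, and in both cases the only nontrivial point is to derive the ``antipodal'' relation $[C_p,(1,a)]+[C_p,(1,p-a)]=0$ from \eqref{rel1}--\eqref{rel4} by telescoping two instances of \eqref{rel4} against each other and cancelling via \eqref{rel1}. The execution of that key step differs: the paper subtracts \eqref{rel4} at the antipodal pair of indices $a=b+1$ and $a=p-b$, for which the rightmost terms cancel directly by \eqref{rel1} (since $((b+1)^{-1}-1)^{-1}\equiv -b^{-1}-1$), yielding $\rho_{b+1}=\rho_b$ and hence all $\rho_a=0$ from \eqref{rel3} with no excluded cases, because $b+1$ and $p-b$ are simultaneously admissible for $2\le b\le (p-3)/2$. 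You instead telescope $s_a-s_{a-1}$ with $s_a=e_a+e_{-a^{-1}}$, pairing \eqref{rel4} at $a$ with \eqref{rel4} at $-(a-1)^{-1}$, which forces a separate patch at the excluded index $(p+1)/2$ via the symmetry $s_a\equiv s_{a^{-1}}$; this works (your admissibility checks for the index $-(a-1)^{-1}$ are right), but is slightly more delicate than the paper's antipodal pairing. Your two-way verification $R_A=R_B$ is also more explicit than the paper's closing ``conclude by matching relations,'' which implicitly uses the rewritten form \eqref{rel6} of \eqref{rel4}; your identification of \eqref{mrel2} at $b=a-1$ with the \eqref{rel4}-relator at $a$, modulo \eqref{rel1} and the $\rho$'s, is exactly that matching.
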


\begin{proof}
Suppose $2\le b\le (p-3)/2$.
We subtract the relations \eqref{rel4} corresponding to $a=b+1$ and $a=p-b$,
noticing that the rightmost terms cancel thanks to \eqref{rel1},
to obtain
\[ [C_p,(1,b+1)]-[C_p,(1,p-b)]=[C_p,(1,b)]-[C_p,(1,p-b-1)]. \]
Starting from \eqref{rel3} we obtain, inductively,
\begin{equation}
\label{rel5}
[C_p,(1,a)]=-[C_p,(1,p-a)]
\end{equation}
for all $a$.
Using \eqref{rel5} and \eqref{rel1}, we rewrite \eqref{rel4} as
\begin{equation}
\label{rel6}
[C_p,(1,a)]+[C_p,(1,a')]+[C_p,(1,a'')]=0
\end{equation}
for $a\notin \{(p-1)/2,p-2\}$.
We conclude by matching relations \eqref{rel1}--\eqref{rel2},
\eqref{rel5}--\eqref{rel6}
with \eqref{mrel1}--\eqref{mrel3} and the
additional relations from the quotient group in the
statement of the lemma.
\end{proof}

While $H_1(X_0(p),\Z)$ is free of rank $2g$
(where $g$ is the genus of $X_0(p)$), there may be torsion in
$H_1(X_0(p)_{\mathrm{orb}},\Z)$:
\[ H_1(X_0(p)_{\mathrm{orb}},\Z)\cong
\begin{cases}
\Z/6\Z\oplus \Z^{2g}, &
\text{if $p\equiv 1$ $\mathrm{mod}$ $12$}, \\
\Z/2\Z\oplus \Z^{2g}, &
\text{if $p\equiv 5$ $\mathrm{mod}$ $12$}, \\
\Z/3\Z\oplus \Z^{2g}, &
\text{if $p\equiv 7$ $\mathrm{mod}$ $12$}, \\
\Z^{2g}, &
\text{if $p\equiv 11$ $\mathrm{mod}$ $12$}.
\end{cases}
\]
Complex conjugation acts on $H_1(X_0(p)_{\mathrm{orb}},\Z)$ by
\[
\big\{0,\frac{1}{a}\big\}\mapsto \big\{0,\frac{1}{p-a}\big\}.
\]
Lemma \ref{lem.algebraic} identifies
$\ocB_2^{[p]}/(\cC\cap \ocB_2^{[p]})$ with the quotient of
$H_1(X_0(p)_{\mathrm{orb}},\Z)$ by the elements of the
form sum of a cycle and its conjugate.

Complex conjugation acts
trivially on $H_1(X_0(p)_{\mathrm{orb}},\Z)_{\mathrm{tors}}$.
When $p\equiv 1$ mod $4$,
intersection number mod $2$ with a conjugation-invariant curve joining the
order $2$ elliptic points splits off
$H_1(X_0(p)_{\mathrm{orb}},\Z)[2]$ equivariantly as a direct summand of
$H_1(X_0(p)_{\mathrm{orb}},\Z)$.
Now $\ocB_2^{[p]}/(\cC\cap \ocB_2^{[p]})$ is a direct sum of
$\Z/2\Z$ when $p\equiv 1$ mod $4$, zero when $p\equiv 3$ mod $4$, and the
quotient of $H_1(X_0(p),\Z)$ by the elements of the
form sum of a cycle and its conjugate.
The latter is accessed by choosing a conjugation-invariant triangulation
of $X_0(p)$ and using spectral sequences relating the equivariant homology
of $X_0(p)$ with the group homology of $H_j(X_0(p),\Z)$,
on the one hand, and the group homology of the groups of $j$-chains on the other,
for $j=0$, $1$, $2$;
cf.\ \cite[\S VII.7]{brown}.
(All group homology is for the group $\Z/2\Z$, corresponding to complex conjugation.)
We omit the details and report only the outcome:
\begin{align*}
H_i\big(\Z/2\Z,H_1(X_0(p),\Z)\big)&=0\qquad\text{for all $i\ge 1$},\\
H^{\Z/2\Z}_j(X_0(p),\Z)&\cong
\begin{cases}
\Z,&\text{if $j=0$}, \\
\Z/2\Z\oplus \Z^g,&\text{if $j=1$}, \\
\Z/2\Z,&\text{if $j\ge 2$}.
\end{cases}
\end{align*}
The vanishing of $H_1\big(\Z/2\Z,H_1(X_0(p),\Z)\big)$ has the consequence that the
subgroup of $H_1(X_0(p),\Z)$ of elements of the form sum of a cycle and
its conjugate is saturated, i.e., has torsion-free quotient.

\appendix
\section{$G$-equivariant Burnside group}
\label{app.BnG}
Let $G$ be a finite abelian group, with character group $A$,
and $X$ a smooth projective variety over an {\em algebraically closed} field of
characteristic zero,  with a faithful action of $G$.
The paper \cite{kontsevichpestuntschinkel} introduced
\begin{itemize}
\item
the abelian group
$$
\cB_n(G),
$$
as the quotient of the $\mathbb Z$-module generated by symbols  $(a_1,\ldots, a_n)$, with $a_i \in A$, and
subject to conditions and
relations similar to those in Definition \ref{def.ocB},
\item
the equivariant Burnside group
$$
\Burn_n(G),
$$
and
\item
the $G$-equivariant birational invariant
$$
\beta(X)\in \Burn_n(G).
$$
\end{itemize}
The invariant $\beta(X)$ is the term
corresponding to $H=G$ in the first sum in the
formula \eqref{eqn.GX} from the Introduction, with
some indices shifted by $1$.
The shift of indices reflects that,
under the assumption that $G$ is nontrivial and
the fixed locus $X^G$ is nonempty,
$X^G$ has positive codimension in $X$.
In this Appendix we explain in detail the formula \eqref{eqn.GX}.

Now, let $k$ be an {\em arbitrary} field of characteristic zero and $X$ a
smooth projective variety over $k$, with
a faithful action of a finite abelian group $G$.
Let $H\subset G$ be a subgroup.
The $H$-fixed locus $X^H$ is smooth,
a finite union of orbits of components.
After removing those where the generic point has stabilizer
strictly larger than $H$ we are left with
$Y_1$, $\dots$, $Y_r$, each with an induced faithful action of the quotient group $G/H$.
Let $Z_1$, $\dots$, $Z_r$ denote the respective quotient varieties.
With the convention from the Introduction, by which we may write
$k(Y)$ when $Y$ is not necessarily irreducible, we have a
$G/H$-Galois algebra extension $k(Y_i)/k(Z_i)$ for every $i$.

\begin{assu}
\label{assu.Hexponent}
For all $H$ and all $k(Y_i)/k(Z_i)$, the field
$k(Z_i)$ contains primitive $e$th roots of unity, where
$e$ is the exponent of $H$, and the homomorphism
\[ H^1(G,k(Y_i)^\times)\to H^1(H,k(Y_i)^\times)^{G/H}=H^1(H,k(Z_i)^\times) \]
of the Hochschild-Serre spectral sequence is surjective.
\end{assu}

The homomorphism in Assumption \ref{assu.Hexponent} is always injective,
since by the Hochschild-Serre spectral sequence
the kernel is $H^1(G/H,k(Y_i)^\times)$, which vanishes by
Hilbert's Theorem 90 (for Galois algebras, this may be
found in \cite[\S 4.3]{JLY}).
Thus, Assumption \ref{assu.Hexponent} implies that it is an isomorphism.

The next result tells us that, that under Assumption \ref{assu.Hexponent},
for every $n$ the non-abelian cohomology set $H^1(G,GL_n(k(Y_i)))$
(see, e.g., \cite[\S 1.3.2]{platonovrapinchuk}) may be identified with
$H^1(H,GL_n(k(Z_i)))$, the set of equivalence classes of
linear $n$-dimensional representations of $H$ over $k(Z_i)$.

\begin{prop}
\label{prop.Hexponent}
Let $G$ be a finite abelian group, $H\subset G$ a subgroup,
$K_0$ a field containing a primitive $e$th root of unity, where
$e$ denotes the exponent of $H$, and
$K$ a $G/H$-Galois algebra over $K_0$.
If the homomorphism $H^1(G,K^\times)\to H^1(H,K_0^\times)$ of
the Hochschild-Serre spectral sequence is surjective,
then for every positive integer $n$
there is a unique bijective map of non-abelian cohomology sets
\[ H^1(G,GL_n(K))\to H^1(H,GL_n(K_0)) \]
that is compatible with
restriction
$$
H^1(G,GL_n(K))\to H^1(H,GL_n(K))
$$ and
extension of scalars
$$
H^1(H,GL_n(K_0))\to H^1(H,GL_n(K)).
$$
\end{prop}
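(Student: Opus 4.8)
The plan is to reduce the statement to a twisting/descent argument for the group $GL_n$, exploiting that the assumption makes $H^1(G,K^\times)\to H^1(H,K_0^\times)$ an isomorphism (as noted just before the statement, using Hilbert 90 for Galois algebras). First I would observe that $K$ decomposes as a product of copies of a field $L$ indexed by $G/H$ (more precisely, $\Spec K \to \Spec K_0$ is a $G/H$-torsor, so after choosing a component $K \cong \prod_{G/H} L$ with $L/K_0$ Galois with group a subgroup of... in the split case $L = K_0$), and that $GL_n(K) = \prod_{G/H} GL_n(L)$ as a $G$-group, on which $H$ acts on each factor and $G/H$ permutes the factors simply transitively. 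By Shapiro's lemma for non-abelian cohomology (the "induced module" / coinduction identity $H^1(G, \mathrm{Ind}_H^G M) = H^1(H,M)$, valid for groups acting on possibly non-abelian $M$, see e.g. Serre's \emph{Galois Cohomology}), we get a canonical bijection $H^1(G, GL_n(K)) \cong H^1(H, GL_n(L))$. In the split case $L = K_0$ this is exactly the asserted bijection to $H^1(H,GL_n(K_0))$; in general one has an extra step comparing $H^1(H,GL_n(L))$ with $H^1(H,GL_n(K_0))$.

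Next I would pin down compatibility with restriction and extension of scalars. The Shapiro isomorphism is induced by the natural projection $GL_n(K) = \prod_{G/H}GL_n(L) \to GL_n(L)$ onto the component indexed by the identity coset, which is $H$-equivariant; on the level of cocycles $z \colon G \to \prod_{G/H}GL_n(L)$ one records $h \mapsto (\text{identity component of } z(h))$ for $h \in H$, and checks this is a cocycle and that the construction is inverse to the obvious "induce a cocycle" map. Compatibility with restriction $H^1(G,GL_n(K)) \to H^1(H,GL_n(K))$ is then the statement that composing the Shapiro map with extension $H^1(H,GL_n(L)) \to H^1(H,GL_n(K))$ (diagonal inclusion) returns the restricted class, which follows from the description of the diagonal $GL_n(L)\hookrightarrow \prod GL_n(L)$ as $H$-equivariant and a direct cocycle comparison. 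Compatibility with extension of scalars $H^1(H,GL_n(K_0)) \to H^1(H,GL_n(K))$ is similar, using that the composite $GL_n(K_0) \to GL_n(K) = \prod GL_n(L) \to GL_n(L)$ (identity component) is just $GL_n(K_0)\hookrightarrow GL_n(L)$.

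For uniqueness: a map of pointed sets compatible with both restriction and extension of scalars is forced, because any class $\xi \in H^1(G,GL_n(K))$ and its image $\eta \in H^1(H,GL_n(K_0))$ must have the same image in $H^1(H,GL_n(K))$ under the two specified arrows, and I would argue the restriction map $H^1(G,GL_n(K)) \to H^1(H,GL_n(K))$ is injective — indeed its "fibers" are controlled by $H^1(G/H, GL_n(K)^H)$ by the inflation-restriction sequence for non-abelian cohomology, and $GL_n(K)^H = GL_n((K)^H)$ with $(\Spec K)^H \to \Spec K_0$ a $G/H$-torsor, so $H^1(G/H, GL_n(K)^H)$ is trivial by Hilbert 90 (Galois-algebra form). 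Together with injectivity of extension of scalars on the image — or rather, the fact that $\eta$ is determined among classes in $H^1(H,GL_n(K_0))$ restricting to the common class in $H^1(H,GL_n(K))$, again by a Hilbert-90 argument for $GL_n$ applied to the Galois algebra $K/K_0$ — this forces the map, hence both existence and uniqueness.

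The main obstacle I anticipate is the bookkeeping in the non-split case: when $K$ is not a product of copies of $K_0$ but of copies of a nontrivial finite Galois extension $L/K_0$, one must separate the "Shapiro in the $G/H$ direction" step from the residual descent along $L/K_0$, and show the two combine to land in $H^1(H,GL_n(K_0))$ rather than merely $H^1(H,GL_n(L))$; this is precisely where the surjectivity hypothesis in Assumption-style form is used, via the isomorphism $H^1(G,K^\times)\cong H^1(H,K_0^\times)$ bootstrapped up to $GL_n$ by the usual dévissage ($GL_n$ versus $PGL_n$ and the center $\mathbb{G}_m$, or a direct twisting argument). I would handle this by the standard twisting technique: fix a base class, twist $GL_n$ by it, and reduce the general fiber computation to the $H^1 = H^1(\mathbb{G}_m)$ computation already available, reading off bijectivity fiber-by-fiber.
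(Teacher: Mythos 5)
The decisive gap is in the non-split case, which is the only case with content. When $K\cong\mathrm{Map}_\Gamma(G/H,L)$ for a nontrivial Galois field extension $L/K_0$ with group $\Gamma\subset G/H$, your claimed decomposition $K\cong\prod_{G/H}L$ with $G/H$ permuting the factors simply transitively is false: the factors are indexed by $(G/H)/\Gamma$, so $GL_n(K)$ is coinduced from the preimage $G'\subset G$ of $\Gamma$, and Shapiro gives $H^1(G,GL_n(K))\cong H^1(G',GL_n(L))$, not $H^1(H,GL_n(L))$, let alone $H^1(H,GL_n(K_0))$. In the split case, where your argument does work, the surjectivity hypothesis is automatic; so the entire content of the proposition -- that surjectivity of $H^1(G,K^\times)\to H^1(H,K_0^\times)$ forces every $H$-representation over $K_0$ to extend to a $G$-cocycle in $GL_n(K)$, and every restricted class to descend -- is exactly the part you defer to ``the standard twisting technique'' and a ``$GL_n$ versus $PGL_n$ d\'evissage.'' That step is not routine and is not carried out: twisting compares fibers of a map that already exists, whereas here the map itself has to be constructed, and there is no group homomorphism $GL_n(K)\to GL_n(K_0)$ inducing it. Note also that your sketch nowhere uses that $H$ is abelian or that $\mu_e\subset K_0$; these hypotheses are essential (they are what permit breaking a rank-$n$ class into characters of $H$, the natural d\'evissage here, and the rank-one case is precisely where the hypothesis enters), so an argument that never invokes them cannot be complete.

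A secondary imprecision: in your uniqueness argument, non-abelian inflation--restriction controls only the fiber of restriction over the distinguished point, via $H^1(G/H,GL_n(K)^H)=H^1(G/H,GL_n(K))$ (note $K^H=K$, since $H$ acts trivially on $K$). For the other fibers you must twist by a cocycle $\xi$, and the relevant group is then $({}_\xi GL_n(K))^H$, the centralizer of $\xi|_H$, which (using $\mu_e\subset K_0$) is a product of unit groups of central simple $K_0$-algebras split by $K$; its $H^1(G/H,-)$ does vanish, but by the generalized Hilbert 90 for such algebras, not the plain Hilbert 90 you cite. That part is fixable. For comparison, the paper's route is entirely different: it uses the surjectivity hypothesis to produce explicit Kummer data -- $1$-cocycles $(u_{i,g})$ lifting a generating set of $\widehat H$ and elements $v_i\in K^\times$ with $u_{i,g}^{n_i}={}^gv_i/v_i$ -- builds from these an equivalence between the category of $G$-Galois algebras $L/L_0$ equipped with a $G$-equivariant map $K\to L$ and the category of $H$-Galois algebras over $L_0$, and then identifies both non-abelian cohomology sets as functorial assignments of free rank-$n$ modules on these categories, checking compatibility after an \'etale extension of $K$. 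Your outline, even if completed along cohomological lines, would be a genuinely different argument; as it stands, the construction of the map in the non-split case, which is the actual assertion, is missing.
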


\begin{proof}
We fix a primitive $e$th root of unity $\zeta\in K_0$.
The extension of scalars map from the statement is injective
(by standard representation theory), so
it suffices to exhibit a compatible bijective map of the
indicated non-abelian cohomology sets.
In fact, it suffices to verify the compatibility condition after
replacing $H^1(H,GL_n(K))$ with $H^1(H,GL_n(\widehat{K}))$ for some
\'etale extension $\widehat{K}/K$.

By the structure theorem of
finite abelian groups,
$$
H\cong \Z/n_1\Z\times\dots\times \Z/n_r\Z,
$$
for
$n_1$, $\dots$, $n_r\ge 2$ with
$n_i\mid n_{i+1}$ for $i=1$, $\dots$, $r-1$; we have $e=n_r$.
Let $\zeta\in K_0$ be a primitive $e$th root of unity.
Sending the $i$th generator of $H$ to $\zeta^{e/n_i}$ and all other
generators to $1$, we have an element of $H^1(H,K_0^\times)$ which,
by hypothesis, comes from a $1$-cocycle $(u_{i,g})_{g\in G}$ with
values in $K^\times$.
As remarked above, the homomorphism from the statement is always
injective, therefore $(u_{i,g}^{n_i})_{g\in G}$ is a
$1$-coboundary, i.e., for some $v_i\in K^\times$ we have
$$
u_{i,g}^{n_i}={}^gv_i/v_i\qquad\text{for all}\qquad g\in G.
$$

The data of $(u_{i,g}^{n_i})_{i,g}$ and $(v_i)_i$ give us a
way to assign, functorially, a $H$-Galois algebra over an
\'etale $K_0$-algebra $L_0$ to every
$G$-Galois $L/L_0$ with $G$-equivariant $K_0$-algebra homomorphism $K\to L$.
Specifically, given $G$-Galois $L/L_0$ with $\iota\colon K\to L$ we
apply Hilbert's Theorem 90 to obtain $w_i\in L^\times$ for every $i$,
satisfying $\iota(u_{i,g})={}^gw_i/w_i$ for all $g$.
Now $\iota(v_i)w_i^{-n_i}$ is Galois-invariant, i.e., lies in $L_0$,
and is unique up to multiplication by an element of $(L_0^\times)^{n_i}$,
for every $i$
(since $w_i$ is unique up to multiplication by an element of $L_0$);
we associate the $H$-Galois algebra
\[
L_0[t_1,\dots,t_r]/(t_1^{n_1}-\iota(v_1)w_1^{-n_1},\dots,
t_r^{n_r}-\iota(v_r)w_r^{-n_r}).
\]
The functorial association is fully faithful.
We deduce that it is essentially surjective,
hence gives an equivalence of categories, using that any
$H$-Galois $L'_0/L_0$ is trivialized by an \'etale extension of $L_0$
(e.g., $L'_0\otimes_{L_0}L'_0\cong \prod_{h\in H}L'_0$) and described
up to isomorphism by an $H$-valued $1$-cocycle (in Galois cohomology).

Finally, the non-abelian cohomology sets from the statement are
invariants of the categories described in the previous paragraph.
A unique element of $H^1(G,GL_n(K))$, respectively,
$H^1(H,GL_n(K_0))$, is associated to a
functorial association of a free $L_0$-module of rank $n$ to every
$G$-Galois $L/L_0$ with $G$-equivariant $K_0$-algebra homomorphism $K\to L$,
respectively, to every $H$-Galois algebra over $L_0$.
The equivalence of categories of the previous paragraph identifies these
two non-abelian cohomology sets.
For the compatibility, we use the category of
$H$-Galois $L'_0/L_0$ with extension of the $K_0$-algebra structure of $L_0$
to a $K$-algebra structure, with functor
\[
\textstyle
(L'_0/L_0,K\stackrel{\beta}\to L_0)\mapsto
\big((\prod_{g\in G}L'_0)^H/L_0,x\mapsto (\beta({}^gx))_{g\in G}\big)
\]
whose composite with the functor of the previous paragraph becomes
naturally isomorphic to the forgetful functor
after replacing $K$ by a suitable \'etale extension $\widehat{K}$.
Functorial associations on this category, as above,
are characterized by elements of $H^1(H,GL_n(\widehat{K}))$,
such that the functors give rise to the relevant maps on
non-abelian cohomology sets.
We obtain the required compatibility.
\end{proof}

Assumption \ref{assu.Hexponent} always holds when
$k$ contains all roots of unity.
In general,
after performing the divisorialification procedure (see Section \ref{sec.birat})
the following stronger condition will hold.

\begin{assu}
\label{assu.strong}
For all $H$ and all $k(Y_i)/k(Z_i)$ the field
$k(Z_i)$ contains primitive $e$th roots of unity, where
$e$ is the exponent of $H$, and the composition of the
map from Assumption \ref{assu.Hexponent} with the
restriction
\[ \Pic^G(X)\to \Pic^G(\Spec(k(Y_i)))\cong H^1(G,k(Y_i)^\times) \]
is a surjective homomorphism
\[ \Pic^G(X)\to H^1(H,k(Z_i)^\times). \]
\end{assu}

\begin{prop}
\label{prop.divisorialification}
Let $k$ be a field of characteristic zero and $X$ a smooth projective variety
over $k$ with a faithful action of a finite abelian group $G$.
Then there exist smooth projective varieties with $G$-action
and $G$-equivariant morphisms
\[ X'=X_n\to \dots\to X_1\to X_0=X, \]
each the blow-up of a smooth $G$-invariant subscheme,
such that $X' \actsfromright G$ satisfies Assumption \ref{assu.strong}.
\end{prop}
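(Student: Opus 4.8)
The plan is to apply the divisorialification algorithm of Bergh directly to the quotient stack $[X/G]$, and then to unwind the resulting divisoriality statement into the cohomological assertion of Assumption \ref{assu.strong}. First I would run Algorithm C of \cite{bergh} on the orbifold $[X/G]$; its geometric stabilizers are subgroups of the abelian group $G$, so the version of the algorithm requiring abelian stabilizers applies, producing a sequence of blow-ups along smooth closed substacks
\[ \cY=\cY_n\to\dots\to \cY_1\to \cY_0=[X/G] \]
with $\cY$ divisorial with respect to a finite collection of line bundles $\cL_1,\dots,\cL_\ell$. Since each $X_j\to [X_j/G]$ is a $G$-torsor, hence faithfully flat, and blow-up commutes with flat base change, each of these blow-ups is the quotient by $G$ of the blow-up of $X_j$ along the smooth $G$-invariant preimage of its center; thus $\cY_j=[X_j/G]$ for a chain $X'=X_n\to\dots\to X_0=X$ as in the statement, and the $\cL_i$ pull back to $G$-equivariant line bundles $L_1,\dots,L_\ell$ on $X'$. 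With the notion of divisoriality extended to line bundles (Section \ref{sec.birat}), the morphism $[X'/G]\to B\G_m^\ell$ determined by $\cL_1,\dots,\cL_\ell$ is representable.

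Next I would translate representability into pointwise data. For a point $x\in X'$ with stabilizer $H=\mathrm{Stab}_G(x)$, representability forces the $H$-action on $\bigoplus_i (L_i)_x$ to be faithful; writing this action through characters $\chi_1,\dots,\chi_\ell$ of $H$, faithfulness is equivalent to the $\chi_j$ generating the character group $\widehat H$. I apply this at the generic point of each stratum: fix $H\subset G$ and a stratum $Y_i\subset (X')^H$ with generic stabilizer exactly $H$ and quotient $Z_i$. Because $G$ is abelian, conjugation in $G$ is trivial, so the inertia of $[X'/G]$ at the corresponding point is the constant group scheme $H$ over the residue field $k(Z_i)$. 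Consequently each $\chi_j$ takes values in $\mu_e(k(Z_i))$ with $e=\exp(H)$, and joint faithfulness of the $\chi_j$ forces $\mu_e(k(Z_i))=\mu_e(\bar k)$; that is, $k(Z_i)$ contains a primitive $e$th root of unity, which is the first assertion of Assumption \ref{assu.strong}. (For noncyclic $H$ one first restricts the $\chi_j$ to a cyclic subgroup of order $e$.)

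Finally I would identify the composite map appearing in Assumption \ref{assu.strong}. With a primitive $e$th root of unity available, $H^1(H,k(Z_i)^\times)=\Hom(H,k(Z_i)^\times)=\Hom(H,\mu_e(k(Z_i)))$ is the character group $\widehat H$; and under $\Pic^G(\Spec k(Y_i))\cong H^1(G,k(Y_i)^\times)$ followed by the Hochschild--Serre map of Assumption \ref{assu.Hexponent} --- restriction to $H$, which acts trivially on $k(Y_i)$, composed with the identification $H^1(H,k(Y_i)^\times)^{G/H}=H^1(H,k(Z_i)^\times)$ --- a $G$-equivariant line bundle on $X'$ is sent to the character by which $H$ acts on its fibre along $Y_i$. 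In particular $[L_j]$ is sent to $\chi_j$, and since the $\chi_j$ generate $\widehat H=H^1(H,k(Z_i)^\times)$ the composite $\Pic^G(X')\to H^1(H,k(Z_i)^\times)$ is surjective. This is the remaining assertion of Assumption \ref{assu.strong}, and it holds for all $H$ and all strata since divisoriality is a condition at every point of $[X'/G]$.

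I expect the crux to be the bookkeeping of the last two steps: identifying the inertia of $[X'/G]$ at the generic point of a stratum with the constant group $H$ over $k(Z_i)$ --- so that divisoriality both produces the needed roots of unity and controls the image of the $L_j$ --- and matching that image with the normal-bundle character $\chi_j$, so as to extract surjectivity. None of this is deep, but it requires threading together the definition of divisoriality, the description of representability via inertia, and the cohomological setup of the Appendix.
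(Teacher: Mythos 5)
Your proposal is correct and follows the same route as the paper: the paper's own proof is the single sentence that divisorialification applied to $[X/G]$ translates into a sequence of blow-ups meeting the stated conditions, and your argument is a faithful expansion of exactly that translation (descent of the blow-ups to $G$-equivariant blow-ups of $X$, and the unwinding of representability of $[X'/G]\to B\G_m^\ell$ at the generic point of each stratum into the roots-of-unity condition and the surjectivity of $\Pic^G(X')\to H^1(H,k(Z_i)^\times)$). The details you supply, in particular the identification of the composite map in Assumption \ref{assu.strong} with the normal/fibre character of a $G$-linearized line bundle along $Y_i$, are the ones the paper leaves implicit and also uses later via \cite[Rmk.\ 7.14]{bergh}.
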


\begin{proof}
The divisorialification procedure, applied to $[X/G]$, translates into
a sequence of blow-ups that meets the stated conditions.
\end{proof}

\begin{prop}
\label{prop.equivweakfact}
Let $k$ be a field of characteristic zero,
$X$ and $X'$ smooth projective varieties with faithful actions of a
finite abelian group $G$ satisfying Assumption \ref{assu.strong}, and
$$
\varphi\colon X'\dashrightarrow X
$$
a $G$-equivariant birational map
restricting to an isomorphism over open $U\subset X$.
Then there exists a weak factorization of $\varphi$, where each
map is, or is inverse to, the blow-up of a smooth $G$-invariant subscheme
disjoint from $U$
and the intermediate projective varieties
with $G$-action satisfy Assumption \ref{assu.strong}.
\end{prop}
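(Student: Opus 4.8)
The plan is to combine the (non-equivariant) functorial weak factorization of \cite{abramovichtemkin} with the $G$-equivariance built into its functoriality, and then to control Assumption \ref{assu.strong} along the factorization by invoking Proposition \ref{prop.divisorialification} at each intermediate step. Concretely, I would first form the quotient stacks $[X/G]$ and $[X'/G]$; since $\varphi$ is $G$-equivariant and restricts to an isomorphism over $U$, it descends to a birational map $[X'/G]\dashrightarrow [X/G]$ of smooth Deligne--Mumford stacks that is an isomorphism over $[U/G]$. Applying the functorial weak factorization to this map of stacks yields a sequence of stacks $\cW_0=[X/G],\cW_1,\dots,\cW_m=[X'/G]$, each map a blow-up or inverse blow-up along a smooth center, with the centers disjoint from $[U/G]$ by the support condition in the functorial statement. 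Pulling back along the $G$-torsors, each $\cW_j$ gives a smooth projective variety $X_j$ with $G$-action (the $G$-torsor over $\cW_j$, which is representable since $G$ acts freely on $X$ and $X'$, hence on the whole factorization by the ``isomorphism over $U$'' property — one must check the generic stabilizer stays trivial, which follows because each $\cW_j$ is birational to $[X/G]$), and each blow-up of $\cW_j$ along a smooth center lifts to the $G$-equivariant blow-up of $X_j$ along the preimage, a smooth $G$-invariant subscheme disjoint from $U$.

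The remaining issue is that the intermediate $X_j$ need not satisfy Assumption \ref{assu.strong}: divisorialification was only applied to the endpoints. To fix this, I would interleave: after producing the raw factorization $X=X_0,\dots,X_m=X'$, apply Proposition \ref{prop.divisorialification} to each $X_j$ to obtain $X_j'\to X_j$, a composite of $G$-equivariant blow-ups of smooth $G$-invariant centers, with $X_j'$ satisfying Assumption \ref{assu.strong}; one arranges these blow-ups to be centered away from $U$ (the divisorialification centers lie over the locus of nontrivial stabilizers and the non-normal-crossing locus, which is disjoint from the free locus $U$, or can be made so by first removing $U$ from consideration). Then each original elementary step $X_j\dashleftarrow X_{j+1}$ is replaced by the composite $X_j'\to X_j\dashleftarrow X_{j+1}\leftarrow X_{j+1}'$, and I would re-apply functorial weak factorization (equivariantly, via quotient stacks, as above) to this birational map between two varieties \emph{both} satisfying Assumption \ref{assu.strong}. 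The key point making this second application behave is that Assumption \ref{assu.strong} is preserved under further $G$-equivariant blow-ups in smooth $G$-invariant centers: this is because the surjectivity of $\Pic^G(X)\to H^1(H,k(Z_i)^\times)$ in Assumption \ref{assu.strong} only improves when one enlarges $\Pic^G$ by a blow-up, and the blown-up variety dominates the old one so the generic $H$-fixed loci and their function fields are birationally unchanged — this is exactly the content of the divisorialificiation being a one-time procedure whose output is stable.

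I expect the main obstacle to be bookkeeping the compatibility of \emph{two} invocations of functorial weak factorization together with the interspersed divisorialifications, in particular verifying that all centers that arise can be taken disjoint from $U$ and that Assumption \ref{assu.strong} genuinely propagates through every elementary blow-up (not merely through the composites). The cleanest way to handle the ``disjoint from $U$'' requirement uniformly is to pass throughout to stacks and use that the open substack $[U/G]$ is a scheme over which everything is an isomorphism, so functoriality of all the algorithms (divisorialification and weak factorization) forces the centers into the closed complement. Once that is set up, the verification that a $G$-equivariant blow-up in a smooth $G$-invariant center disjoint from $U$ preserves Assumption \ref{assu.strong} is a direct diagram chase with the restriction maps on equivariant Picard groups, using Hilbert 90 (as in the discussion following Assumption \ref{assu.Hexponent}) to identify the relevant $H^1$'s; I would state this as the one lemma to prove carefully and leave the rest to functoriality.
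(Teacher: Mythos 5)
Your overall architecture differs from the paper's and contains a gap that I do not think the interleaving can repair. The paper's proof is short: by Assumption \ref{assu.strong} one chooses finitely many $G$-linearized line bundles on $X$ whose classes generate the relevant groups $H^1(H,k(Z_i)^\times)$, assembles them into a $\G_m^r$-torsor $V\to X$ with $G$-action given by the linearizations, and invokes \cite[Rmk.\ 7.14]{bergh}, which characterizes Assumption \ref{assu.strong} as \emph{freeness of the $G$-action on $V$}. Functorial weak factorization is then applied compatibly to $X'\to X$ and to $V'=X'\times_XV\to V$ (preserving the torsor structure, the $G$-action, and the isomorphism over $U$), and the assumption holds at every intermediate stage simply because freeness of the action propagates through an elementary blow-up in both directions: stabilizers upstairs are contained in stabilizers of the images, and conversely a nontrivial stabilizer of a point of the center fixes a point of the exceptional projective bundle. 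No divisorialification of the intermediate varieties is needed.

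Your proposal instead interleaves Proposition \ref{prop.divisorialification} at each step and then ``re-applies functorial weak factorization'' to maps between varieties satisfying Assumption \ref{assu.strong} --- but that is exactly the statement being proved, so the argument is circular (or an infinite regress): the second application again produces uncontrolled intermediate varieties. Your proposed escape, that Assumption \ref{assu.strong} is preserved by blow-ups in smooth $G$-invariant centers, does not close the loop even if established, because a weak factorization is a zig-zag containing blow-\emph{downs}: when $X_j=B\ell_CX_{j+1}$, knowing the assumption for $X_j$ says nothing about $X_{j+1}$ via that claim, and the naive converse is false (every variety is dominated by its divisorialification, which satisfies the assumption while the base need not). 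The statement that genuinely propagates in both directions along the zig-zag is freeness of the action on the \emph{fixed} torsor pulled back from $X$ --- the line bundles must come from $X$ and be carried along, not re-chosen at each stage --- and that is the idea your writeup is missing. (Separately, your claimed ``direct diagram chase'' for preservation under blow-ups glosses over the new fixed-locus components created inside the exceptional divisor, which is where the actual content lies and which the torsor-freeness formulation handles cleanly.)
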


\begin{proof}
There is no loss of generality in supposing $\varphi$ to be a morphism.
By Assumption \ref{assu.strong} there is a finite collection of
$G$-linearized line bundles on $X$ whose classes in $\Pic^G(X)$ map
to a generating system of
$H^1(H,k(Z_i)^\times)$ for every $H$ and $k(Y_i)/k(Z_i)$.
We represent these by a $\G_m^r$-torsor $V\to X$ (where $r$ denotes the
number of line bundles), with
$G$-action on $V$ determined by the linearizations.
By \cite[Rmk.\ 7.14]{bergh}, Assumption \ref{assu.strong} for $X$ implies
that the $G$-action on $V$ is free.
Let $V'=X'\times_XV$.
Functorial weak factorization \cite{abramovichtemkin} provides
compatible weak factorizations of $X'\to X$
($G$-invariant, maintaining isomorphisms over $U$) and $V'\to V$, with
$\G_m^r$-torsor structure preserved throughout the weak factorization.
Since, for any subgroup $H\subset G$, the property of having a point with
stabilizer exactly $H$ is preserved under blow-up of a smooth $G$-invariant
subscheme, we see that the freeness of the $G$-action is maintained throughout
the weak factorization.
By \cite[Rmk.\ 7.14]{bergh}, again, we
deduce that Assumption \ref{assu.strong} is maintained throughout the
weak factorization.
\end{proof}

As stated in the Introduction,
$\Burn_n(G)$ is a quotient of the free abelian group on triples
consisting of a subgroup $H$ of $G$,
a $G/H$-Galois algebra extension $K$ of a field $K_0$ of
some transcendence degree $d\le n$ over $k$ satisfying
Assumption \ref{assu.Hexponent}, and a faithful
$(n-d)$-dimensional linear representation of $H$ over $K_0$ with
trivial space of invariants.
Any such representation splits as a sum of one-dimensional representations,
so we may write the representation as a sequence of $(n-d)$ nonzero elements
that generate the Cartier dual $\widehat{H}$.
Triples related by an equivariant isomorphism of algebras over $k$ are
regarded as equivalent, as are those which differ by a permutation of the
elements of $\widehat{H}$.
Then we identify
$$
[G/H \actsfromleft K,(a_1,\dots,a_{n-d})]
$$
with,
for any $2\le j\le n-d$,
the sum over pairs $(I,C_I)$ with
$\emptyset\ne I\subset \{1,\dots,j\}$ and $C_I$ a coset (of some subgroup) in $A$,
satisfying
\[ I=\{1\le i\le j\,|\,a_i\in C_I\},\qquad
C_I=a_{i_0}+\langle a_i-a_{i_0}\rangle_{i\in I}\,\,(i_0\in I), \]
of the generator indexed by the triple that we get as follows:
\begin{itemize}
\item Set $H_I=\bigcap_{\chi\in A_I}\ker(\chi)$, where
$A_I=\langle a_i-a_{i_0}\rangle_{i\in I}$.
So,
$$
\widehat{H_I}=A/A_I.
$$
\item Let a representation of $H_I$ be determined by $a_{i_0}$ together with
$a_i-a_{i_0}$ for all $i\le j$ with $i\notin I$
and $a_{j+1}$, $\dots$, $a_{n-d}$;
this gives a sequence of elements of $A/A_I$ that is
independent of $i_0\in I$.
\item Writing $I=\{i_0,i_1,\dots,i_{|I|-1}\}$ and letting
$$
b^{(i)}=(b^{(i)}_g)_{g\in G}
$$
denote the $1$-cocycle with values in $K^\times$
corresponding by Assumption \ref{assu.Hexponent}
to $a_i\in \widehat{H}=H^1(H,K_0^\times)$, we let $G/H_I$ act on the field
$K(t_1,\dots,t_{|I|-1})$ by the given action on $K$ and by
$b^{(i_c)}/b^{(i_0)}$ on $t_c$, for $c=1$, $\dots$, $|I|-1$.
\end{itemize}
It may happen, for some $(I,C_I)$, that
$0$ appears in the sequence of elements of $A/A_I$.
Those $(I,C_I)$ are simply omitted from the sum, leaving a sum of generators
associated with valid triples.

\bibliographystyle{plain}
\bibliography{burnside}

\end{document}